\DeclareFontFamily{U}{rsfs}{\skewchar\font"7F}
\DeclareFontShape{U}{rsfs}{m}{n}{
	<-6> rsfs5
	<6-8> rsfs7
	<8-> rsfs10
	}{}
\DeclareMathAlphabet{\mathscr}{U}{rsfs}{m}{n}
\theoremstyle{plain}
\newtheorem{theorem}{Theorem}[section]
\newtheorem{prop}[theorem]{Proposition}
\newtheorem{lemma}[theorem]{Lemma}
\theoremstyle{definition}
\theoremstyle{remark}
\newtheorem{ass}{Assumption}
\newtheorem{remark}{Remark}[section]
\def\real{{\mathbb R}}
\def\integer{{\mathbb Z}}
\def\supp{\mathop{\mathrm{supp}}\nolimits}
\numberwithin{equation}{section}
\def\now{%
\ifnum \hour<13
  \ifnum \hour=0 \advance \hour by 12 \number\hour:\else \number\hour:\fi%
     \ifnum \minute<10 0\fi%
     \number\minute%
\ A.M.%
\else \advance \hour by -12 \number\hour:%
  \ifnum \minute<10 0\fi%
  \number\minute%
  \ P.M.%
\fi%
}
\title{Dynamic entropic repulsion for interacting interfaces}
\author{Takao~Nishikawa}
\address{Department of Mathematics, College of Science and Technology, Nihon University, 1-8-14 Kanda-Surugadai, Chiyoda-ku, Tokyo 101-8308, Japan}
\email{nisikawa@math.cst.nihon-u.ac.jp}
\keywords{entropic repulsion, Ginzburg-Landau model, effective interfaces,
massless fields}
\subjclass{60K35, 82C24, 35K55}
\begin{document}

\begin{abstract}
	The dynamic entropic repulsion for the Ginzburg-Landau $\nabla\phi$
	interface model was discussed in \cite{DN03}
	and the asymptotics of the height of the interface was identified.
	This paper studies a similar problem for two interfaces on the wall
	which are interacting with one another by the exclusion rule.
	Each leading order of the asymptotics of height
	is $\sqrt{\log t}$ as $t\to\infty$ for the system on $\integer^d,\,d\ge3$,
	$\log t$ for the system on $\integer^2$.
	The coefficient of the leading term for each interface is also identified.
\end{abstract}

\maketitle

\section{Introduction}
Under the coexistence of distinct phases, interfaces
separating these phases are formed. One of problems on the interface separating
phases is the study of the effect of a hard wall. Once imposing the
effect of the hard wall on random interfaces, interfaces are pushed
up by the fluctuation. The problem
to identify how high the interface is pushed up is called entropic
repulsion. 

The paper \cite{DN03} investigated the such problem in the dynamical situation
and identified the asymptotic behavior of the height for
the Ginzburg-Landau $\nabla\phi$ interface model on the hard wall.
This dynamics can be regarded as the motion of the interface
separating two phases and reflected by the hard wall. 
In this paper, let us discuss the entropic repulsion for
two interfaces separating three distinct phases, which are reflected by
the hard wall and interacting with one another by the exclusion.
We shall consider the stochastic interface $\Phi_t=\{(\phi_t^{(1)}(x),
\phi_t^{(2)}(x));\,x\in\integer^d\}$
governed by the following system of SDEs of Skorokhod type:
\begin{equation}\label{SDE1}
\left\{\begin{split}
	&d\phi_t^{(1)}(x)=\Delta\phi_t^{(1)}(x)dt+\sqrt{2}dw_t^{(1)}(x)
	+d\ell_t^{(1)}(x)-d\ell_t^{(2)}(x), \\
	&d\phi_t^{(2)}(x)=\Delta\phi_t^{(2)}(x)dt+\sqrt{2}dw_t^{(2)}(x)
	+d\ell_t^{(2)}(x), \\
	&\text{$\ell_t^{(1)}(x)$ and $\ell_t^{(2)}(x)$ are increasing in $t$,}
	\qquad\qquad\qquad\qquad\qquad\qquad\qquad\qquad  \\
	&0\le \phi_t^{(1)}(x)\le \phi_t^{(2)}(x),\quad t\ge0, \\
	&\int_0^\infty \phi_t^{(1)}(x)\,d\ell_t^{(1)}(x)=0, \\
	&\int_0^\infty (\phi_t^{(2)}(x)-\phi_t^{(1)}(x))\,d\ell_t^{(2)}(x)=0, \\
\end{split}\right.
\end{equation}
for $x\in\integer^d$, where the operator $\Delta$ is the discrete
Laplacian on $\integer^d$, that is,
\[\Delta\zeta(x)=\sum_{
\begin{subarray}{c}
y\in\integer^d, \\
|x-y|=1
\end{subarray}}
(\zeta(y)-\zeta(x))\]
for $\zeta=\{\zeta(x)\in\real; x\in\integer^d\}\in\real^{\integer^d}$.
Here, $\{w_t^{(i)}(x);\,x\in\integer^d,i=1,2\}$ is the family
of independent one-dimensional standard Brownian motions.
The processes $\ell_t^{(1)}(x)$ and $\ell_t^{(2)}(x)$ are called the
local time for $\phi_t^{(1)}(x)$ and $\phi_t^{(2)}(x)-\phi_t^{(1)}(x)$,
respectively.

We assume the following condition on the initial data
throughout this paper.
\begin{ass}
We assume that the initial data of $\Phi_t$ is distributed
by a independent identical distribution $\prod_{x\in\integer^d}
\rho(d\phi^{(1)}(x),d\phi^{(2)}(x))$.
We moreover assume that $\rho$ satisfies the following condition:
\begin{enumerate}
\item the support of $\rho$ is in $\{
(u,v)\in\real^2;\, 0\le u\le v\}$,
\item $\rho$ has a finite second moment,
\item $\rho$ is absolutely continuous with respect to the Lebesgue measure
on $\real^2$.
\end{enumerate}
\end{ass}
The conditions (i) and (ii) is quite natural
for the existence of the solution of \eqref{SDE1}.
The condition (iii) is rather technical requirement. It is imposed
in order to calculate the relative entropy, see Section~\ref{sec-lower}.
We now state our main result which gives
the precise asymptotic behavior of height of interfaces.
\begin{theorem}\label{main-thm}
We have for $i=1,2$
\begin{gather}
	\lim_{t\to\infty}P\left(\phi_t^{(i)}(0)\in
	\left(\sqrt{(C_i/2-\epsilon)\log_d(t)},
	\sqrt{(C_i/2+\epsilon)\log_d(t)}\right)\right)=1 \label{eq1.4}
\end{gather}
with two constant $C_1,C_2>0$, where $\log_d(t)$ is defined by
\begin{equation*}
	\log_d(t)=\begin{cases}
		(\log t)^2,& d=2, \\
		\log t,& d\ge3.
	\end{cases}
\end{equation*}
Here, the constants $C_1,C_2$ are explicitly identified as
\begin{align*}
	C_1&=\begin{cases}
		\displaystyle\int_0^\infty p_t(0,0)\,dt,& d\ge 3, \\
		\displaystyle\lim_{t\to\infty}\frac{1}{\log t}\int_0^t p_s(0,0)\,ds,& d=2,
	\end{cases} \\
	C_2&=(\sqrt{2}+1)^2C_1,
\end{align*}
where $\{p_t(x,y)\}$ is the transition kernel of the simple random walk
on $\integer^d$ generated by $2\Delta$.
\end{theorem}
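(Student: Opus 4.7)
The plan is to adapt the strategy of \cite{DN03} to the two-interface system by performing the change of variables $\eta_t(x) := \phi_t^{(2)}(x)-\phi_t^{(1)}(x)$, which converts the ordering constraint $\phi^{(1)}\le\phi^{(2)}$ into the simple non-negativity constraint $\eta\ge 0$. Subtracting the two equations in \eqref{SDE1} shows that $\eta$ satisfies the Skorokhod-type SDE
\begin{equation*}
	d\eta_t(x) = \Delta\eta_t(x)\,dt + 2\,db_t(x) + 2\,d\ell_t^{(2)}(x) - d\ell_t^{(1)}(x),
\end{equation*}
where $b_t(x) := (w_t^{(2)}(x) - w_t^{(1)}(x))/\sqrt{2}$ is a family of independent standard Brownian motions. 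Hence $\phi^{(1)}$ is a reflected interface of DN03 type (noise coefficient $\sqrt{2}$, wall at $0$), while $\eta$ is a reflected interface with noise coefficient $2$ and wall at $0$; the two are coupled only through the local-time terms. The theorem then reduces to proving the correct concentration for each of $\phi^{(1)}(0)$ and $\eta(0)$ separately, since $\phi^{(2)}=\phi^{(1)}+\eta$ together with the identity $\sqrt{C_1/2}+\sqrt{C_1}=\sqrt{C_2/2}$ (valid because $C_2=(\sqrt{2}+1)^2C_1$) yields the bound for $\phi^{(2)}(0)$.

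For the upper bound I would use the standard stochastic monotonicity of Skorokhod-reflected SDEs: dropping the downward-pushing local-time terms only enlarges the processes, giving a pathwise coupling $\phi_t^{(1)}(x)\le\widetilde\phi_t^{(1)}(x)$ and $\eta_t(x)\le\widetilde\eta_t(x)$, where $\widetilde\phi^{(1)}$ is the DN03 single-interface SDE with noise $\sqrt{2}$ and $\widetilde\eta$ is the same SDE with noise $2$. Applying \cite{DN03} directly to $\widetilde\phi^{(1)}$ gives the upper half of \eqref{eq1.4} for $i=1$; after the rescaling $\widetilde\eta/\sqrt{2}$, the same theorem gives $\eta_t(0)\le\sqrt{(C_1+\epsilon)\log_d(t)}$ with probability tending to one. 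Summing these two bounds yields the upper half of \eqref{eq1.4} for $i=2$.

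For the lower bound I would follow the relative-entropy method of \cite{DN03}. On a box $\Lambda_N\subset\integer^d$ with $N\sim\sqrt{t}$, the invariant measure $\nu_N$ of \eqref{SDE1} is the two-component Gaussian free field, with independent components of covariance $(-\Delta)^{-1}$, conditioned on $0\le\phi^{(1)}\le\phi^{(2)}$. Condition~(iii) on $\rho$ ensures that the initial relative entropy $H(\mu_0\,|\,\nu_N)$ grows at most like $N^d$, so the entropy-production inequality for reflected SDEs controls $H(\mu_t\,|\,\nu_N)$, and a standard entropy inequality transfers concentration of local functionals from $\nu_N$ to $\mu_t$. The task is then reduced to a static two-interface entropic-repulsion estimate under $\nu_N$: in the $(\phi^{(1)},\eta)$ coordinates the unconstrained Gaussian has covariances $(-\Delta)^{-1}$ and $2(-\Delta)^{-1}$, so a coordinate-wise version of the Bolthausen--Deuschel--Giacomin type estimate should give the desired lower bounds on $\phi^{(1)}(0)$ and $\eta(0)$.

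The main obstacle is that under the unconstrained Gaussian the variables $\phi^{(1)}$ and $\eta$ are not independent --- their cross-covariance equals $-(-\Delta)^{-1}$ --- so the constraints $\phi^{(1)}\ge0$ and $\eta\ge0$ interact non-trivially under $\nu_N$ and the coordinate-wise argument cannot be invoked blindly. The natural remedy is to condition $\eta$ on $\phi^{(1)}$ (or vice versa) and verify that the conditional covariance still has the correct leading Gaussian kernel, so that the coupling perturbs the asymptotics only at subleading order. Controlling this perturbation uniformly in $N$ and ensuring its compatibility with the entropy-production estimate, so that the static lower bound indeed transfers to the time-$t$ law $\mu_t$, is the most delicate part of the argument.
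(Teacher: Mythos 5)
Your upper-bound argument is essentially the one in the paper: drop the downward-pushing local-time terms to obtain a monotone coupling, treat $\phi^{(1)}$ with the single-interface result of \cite{DN03}, and observe that the gap, freed from the wall, has the law of $\sqrt{2}$ times a \cite{DN03}-type reflected interface, whence $\sqrt{C_1/2}+\sqrt{C_1}=\sqrt{C_2/2}$. That half is fine. The lower bound, however, has two genuine gaps.

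First, you propose to \emph{re-derive} the static entropic repulsion for the two-layer Gibbs measure by passing to the coordinates $(\phi^{(1)},\eta)$ and invoking a ``coordinate-wise'' Bolthausen--Deuschel--Giacomin estimate, while admitting that the cross-covariance $-(-\Delta)^{-1}$ makes the two constraints interact and that controlling this is ``the most delicate part.'' That delicate part is precisely the content of \cite{BG03} ($d\ge3$) and \cite{Sa03} ($d=2$); the interaction between the two constraints is what produces the non-obvious constant $C_2=(\sqrt2+1)^2C_1$ rather than a naive sum of single-layer heights, and your conditioning sketch does not supply it. The paper simply quotes these static results; as it stands your proposal replaces a citation by an unproved (and hard) lemma.

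Second, and more structurally: the static results of \cite{BG03} and \cite{Sa03} control only the \emph{sample mean} $|\Lambda_{\kappa N}|^{-1}\sum_{x\in\Lambda_{\kappa N}}\phi^{(i)}(x)$, not the single-site value. Transferring this to the dynamics via the log-Sobolev/entropy argument therefore yields only a lower bound on the \emph{expectation} $E[\phi_t^{(i)}(0)]$ (using shift invariance and the comparison with the finite-volume dynamics), not the statement \eqref{eq1.4} in probability. Your proposal has no mechanism to upgrade from the mean to concentration at a single site. The paper closes this gap with a separate variance estimate, $\mathrm{var}(\phi_t^{(i)}(0))\le K_1\int_0^{K_2t}p_s(0,0)\,ds$ (Theorem~\ref{asymp-var}), obtained from the Lipschitz contraction property of the reflected semigroup (Proposition~\ref{semigr-lip}), which shows the fluctuation is $o(\sqrt{\log_d(t)})$. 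Some such step is indispensable, and it is absent from your outline. Note also that you cannot get the lower bound for $\eta(0)$ by comparison with a single reflected interface, since the coupling term $-d\ell^{(1)}$ pushes $\eta$ \emph{down}, so monotonicity only goes the wrong way for this half of the argument; the joint static estimate really is needed.
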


This result says that the lower interface grows up like
$\sqrt{C_1\log_d(t)/2}$ as $t\to\infty$ and the upper
grows up like $(\sqrt{2}+1)\sqrt{C_1\log_d(t)/2}$. We note that
the gap between the lower and upper grows up like $\sqrt{C_1\log_d(t)}$
and it is different
from asymptotics of the single interface, see \cite{DN03}.
From this fact, one can see that the upper interface is pushed up more
by the exclusion from another interface than the exclusion from the hard
wall.

We need to remark that this result is related to the entropic repulsion
of the finite volume
Gibbs measures on $\Lambda_N:=[-N,N]^d\cap\integer^d$ of multi-layered system.
It is introduced by $\mu_{\Lambda_N}^{+}(\cdot)=\mu_{\Lambda_N}(\cdot|0\le\phi^{(1)}\le\phi^{(2)})$, where $\mu_{\Lambda_N}$ is given by
\begin{align*}
\mu_{\Lambda_N}(d\phi^{(1)}d\phi^{(2)})=
\frac{1}{Z_{\Lambda_N}}&\exp\left(\sum_{i=1,2}\sum_{x\in\Lambda}
\phi^{(i)}(x)\Delta\phi^{(i)}(x)\right) \\
&\times\prod_{x\in\Lambda_N}d\phi^{(1)}(x)d\phi^{(2)}(x)
\prod_{x\in\integer^d\smallsetminus\Lambda_N}\delta_0(d\phi^{(1)}(x))\delta_0(d\phi^{(2)}(x))
\end{align*}
where $Z_{\Lambda_N}$ is the normalizing constant. 
In \cite{BG03} (when $d\ge3$) and \cite{Sa03} (when $d=2$),
the asymptotic behavior of sample mean is identified as follows:
\begin{align*}
	\lim_{N\to\infty}&\mu_{\Lambda_N}\left(|\Lambda_{\kappa N}|^{-1}
	\sum_{x\in \Lambda_{\kappa N}}
	\phi^{(i)}(x)\in
	\left(\sqrt{(C_i-\epsilon)\log_d(N)},
	\sqrt{(C_i+\epsilon)\log_d(N)}\right)\right)=1
\end{align*}
holds for every $i=1,2,\,\eta>0$ and $0<\kappa<1$.
Here, the constants $C_1,C_2$ is same as in Theorem~\ref{main-thm}.
We can easily to see that the asymptotic behavior stated above 
coincides with \eqref{eq1.4} with taking $t=N^{2\pm\epsilon}$.

\begin{remark}
	We require the asymptotic behavior stated above to show
	Theorem~\ref{main-thm}.
	Since such asymptotic behavior of the height of the interface
	distributed by the Gibbs measure is unknown in general,
	we only think of the Gaussian system \eqref{SDE1}.
\end{remark}

We shall prove Theorem~\ref{main-thm} by a similar method to \cite{DN03}.
We split the proof of Theorem~\ref{main-thm} into two parts,
the lower bound and the upper bound of the height of interfaces,
see Theorems~\ref{lower-bound} and \ref{upper-bound}, respectively.
For the lower bound, we derive it by reducing our problem to that for
the finite volume Gibbs measure with applying the comparison theorem
and the logarithmic Sobolev inequality. 
Though we have the pointwise estimate for the finite volume Gibbs measure
in the case of the single interface (see \cite{BDZ95}), we have only
the estimate for the sample mean, see \cite{BG03} and \cite{Sa03}
for details, and it is not enough for our goal.
However, since we can show the lower bound for the expectation
value with the help of the result of \cite{BG03} and \cite{Sa03},
we shall establish the estimate for the variance and show that
the order of the fluctuation is relatively small to that of the expectation
value, see Proposition~\ref{asymp-var} for details.
The proof of the upper bound is rather simple. We construct the suitable dynamics which always stays above \eqref{SDE1} and give the upper bound
for the introduced with applying the result of \cite{DN03}.

The organization of this paper is the following. In Section~\ref{sec-dyn},
we study several properties of \eqref{SDE1} and the dynamics on a finite set
corresponding to \eqref{SDE1}. In Section~\ref{sec-lower} and \ref{sec-upper},
we shall give the lower bound and the upper bound of the height
of the interface, respectively.

\section{Dynamics on an infinite set and on a finite set}\label{sec-dyn}
\subsection{Notations}
Before starting the discussion, we shall prepare several notations
which are used later.

Let $(\integer^d)^*$ be the set of all directed bonds $b=(x,y),\,
x,y\in\integer^d,|x-y|=1$ in $\integer^d$.
We write $x_b=x$ and $y_b=y$ for $b=(x,y)$.
For every subset $\Lambda$ of $\integer^d$, we denote
the set of all directed bonds included $\Lambda$ and touching $\Lambda$
by $\Lambda^*$ and $\overline{\Lambda^*}$, respectively.
That is,
\begin{align*}
	\Lambda^*&:=\{b\in(\integer^d)^*;\,x_b\in\Lambda
	\text{ and }y_b\in\Lambda\},\\
	\overline{\Lambda^*}&:=\{b\in(\integer^d)^*;\,x_b\in\Lambda
	\text{ or }y_b\in\Lambda\}.
\end{align*}

For a height variable $\phi=\{\phi(x);\,x\in\integer^d\}\in\real^{\integer^d}$,
we define the (discrete) gradient operator $\nabla$ by
$\nabla\phi(b):=\phi(x)-\phi(y)$ for $\phi\in\real^{\integer^d}$
and $b=(x,y)\in(\integer^d)^*$.
Here, we note that the discrete Laplacian has the following expression:
\[\nabla\phi(x)=-\sum_{b\in(\integer^d)^*:x_b=x}\nabla\phi(b),
\quad \phi\in\real^{\integer^d}.\]

We denote $\phi\le\psi$ for $\phi,\psi\in\real^{\integer^d}$,
if $\phi(x)\le\psi(x)$ holds for every $x\in\integer^d$.
Similarly, for $\Phi=(\phi^{(1)},\phi^{(2)}),\Psi=(\psi^{(1)},\psi^{(2)})\in
\real^{\integer^d}\times\real^{\integer^d}$,
we denote $\Phi\le\Psi$
if $\phi^{(1)}\le\psi^{(1)}$ and $\phi^{(2)}\le\psi^{(2)}$ hold.
We denote the configuration $\phi\in\real^{\integer^d}$ such that
$\phi(x)=0$ for every $x\in\integer^d$ simply by $0$.

Let us define a space of height $\mathscr{X}^2_r\,(r>0)$ by
\begin{align*}
	\mathscr{X}^2_r&=\left\{\Phi=(\phi^{(1)},\phi^{(2)})\in
	\real^{\integer^d}\times\real^{\integer^d};\,\|\phi\|_{r}^2:=
	\sum_{i=1,2}\sum_{x\in\integer^d}|\phi^{(i)}(x)|^2\exp(-2r|x|)<\infty
	\right\}.
\end{align*}
We note that $\mathscr{X}^2_r$ is Hilbert space with the inner product
\[(\Phi,\Psi)_r=
\sum_{i=1,2}\sum_{x\in\integer^d}\phi^{(i)}(x)\psi^{(i)}(x)\exp(-2r|x|)\]
for $\Phi=(\phi^{(1)},\phi^{(2)}),\Psi=(\psi^{(1)},\psi^{(2)})
\in\real^{\integer^d}\times\real^{\integer^d}$.
We define the space of height variables $\mathscr{X}^2_{r,+}\, (r>0)$ by 
$\mathscr{X}^2_{r,+}=\{\Phi=(\phi^{(1)},\phi^{(2)})\in\mathscr{X}^2_r;
0\le\phi^{(1)}\le\phi^{(2)}\}$, which is the actual state space of the solution $\Phi_t$
of \eqref{SDE1}.

For $\Phi=(\phi^{(1)},\phi^{(2)})\in\real^{\integer^d}\times\real^{\integer^d}$, we say ``$\Phi$ has a compact support'' when there exists a
finite $\Lambda\subset\integer^d$ such that
\[\phi^{(i)}(x)=0,\quad x\in\integer^d\smallsetminus\Lambda,i=1,2\]
and we then denote the smallest $\Lambda$ by $\supp\Phi$.

\subsection{The dynamics on a finite set}
We introduce the dynamics $\Phi_t^{\Lambda}
=(\phi_t^{(1),\Lambda},\phi_t^{(2),\Lambda})$ on finite $\Lambda$
by the following system of SDEs of Skorokhod type:
\begin{equation}\label{SDE2}
\begin{cases}
	d\phi_t^{(1),\Lambda}(x)=\Delta\phi_t^{(1),\Lambda}(x)dt+\sqrt{2}dw_t^{(1)}(x)
	+d\ell_t^{(1),\Lambda}(x)-d\ell_t^{(2),\Lambda}(x),& x\in\Lambda, \\
	d\phi_t^{(2),\Lambda}(x)=\Delta\phi_t^{(2),\Lambda}(x)dt+\sqrt{2}dw_t^{(2)}(x)
	+d\ell_t^{(2),\Lambda}(x), &x\in\Lambda, \\
	\phi_t^{(1),\Lambda}(x)=\phi_0^{(1),\Lambda}(x),&x\in\integer^d\smallsetminus\Lambda \\
	\phi_t^{(2),\Lambda}(x)=\phi_0^{(2),\Lambda}(x),&x\in\integer^d\smallsetminus\Lambda \\
	0\le \phi_t^{(1)}(x)\le \phi_t^{(2)}(x),& t\ge0,x\in\integer^d \\
	\text{$\ell_t^{(1)}(x)$ and $\ell_t^{(2)}(x)$ are increasing in $t$,} &\\
	\int_0^\infty \phi_t^{(1)}(x)\,d\ell_t^{(1)}(x)=0, & \\
	\int_0^\infty (\phi_t^{(2)}(x)-\phi_t^{(1)}(x))\,d\ell_t^{(2)}(x)=0. &
\end{cases}
\end{equation}
There exists the unique strong solution $\Phi^{\Lambda}_t
=(\phi^{(1),\Lambda}_t,\phi^{(2),\Lambda}_t)$
of \eqref{SDE1} with an arbitrary initial data satisfying
$0\le\phi^{(1)}_0\le\phi^{(2)}_0$.
When $\Lambda=\Lambda_N:=[-N,N]^d\cap\integer^d$, we simply denote $\Phi_t^{\Lambda_N}(x),\,\phi_t^{(i),\Lambda_N}(x)$ and $\ell_t^{(i),\Lambda_N}(x)$ by $\Phi_t^{N}(x),\,\phi_t^{(i),N}(x)$ and $\ell_t^{(i),N}(x)$, respectively.

We should note that the comparison theorem holds for the dynamics \eqref{SDE1}.
It will help us to show the lower bound.
\begin{prop}\label{comparison3}
Let $\Phi_t=(\phi^{(1)}_t,\phi^{(2)}_t)$ and $\Phi_t^\Lambda=
(\phi^{(1),\Lambda}_t,\phi^{(2),\Lambda}_t)$ be solutions of \eqref{SDE1}
and \eqref{SDE2} with common Brownian motions $\{w_t^{(i)}(x);\,x\in\integer^d,i=1,2\}$. We assume that $\Phi^{\Lambda}_0$ and $\Phi_0$ satisfy
$\phi^{(i),\Lambda}_0(x)=\phi^{(i)}_0(x)$ for $x\in\Lambda$
and $i=1,2$ and $\phi^{(i),\Lambda}_0(x)=0$ for $x\in\integer^d\smallsetminus
\Lambda$. Then, $\Phi_t^\Lambda\le\Phi_t$ holds for every $t\ge0$.
\end{prop}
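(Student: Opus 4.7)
I plan to establish the comparison by a direct Lyapunov argument on the site-wise differences
\begin{equation*}
u^{(i)}_t(x) := \phi^{(i),\Lambda}_t(x) - \phi^{(i)}_t(x),\qquad i=1,2,\ x\in\integer^d.
\end{equation*}
Since the two dynamics are driven by the same Brownian motions, the stochastic parts cancel and each $u^{(i)}(x)$ is a continuous process of bounded variation, with dynamics on $\Lambda$ involving only the Laplacian and the two families of local times. Two observations are immediate: for $x\in\integer^d\smallsetminus\Lambda$ one has $\phi^{(i),\Lambda}_t(x)\equiv 0\le\phi^{(i)}_t(x)$, so $u^{(i)}_t(x)\le 0$ there for every $t$; and at $t=0$, $u^{(i)}_0\equiv 0$ on $\Lambda$ by assumption. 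To propagate non-positivity inside $\Lambda$, I consider
\begin{equation*}
F(t) := \sum_{x\in\Lambda}\bigl\{[u^{(1)}_t(x)]_+^2 + [u^{(2)}_t(x)]_+^2\bigr\},
\end{equation*}
a finite non-negative sum with $F(0)=0$. Because $\phi(u)=u_+^2$ is $C^1$, the chain rule for BV processes gives $d[u^{(i)}_+]^2 = 2u^{(i)}_+\,du^{(i)}$ with no It\^o correction, so it suffices to show $dF\le 0$ by treating the drift and local time parts separately.

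For the drift: the functions $u^{(i)}_+$ vanish outside $\Lambda$ by the first observation above, so summation by parts on $\integer^d$ yields
\begin{equation*}
\sum_{x\in\Lambda}u^{(i)}_+(x)\,\Delta u^{(i)}(x)
= -\tfrac12\sum_{b\in(\integer^d)^*}\nabla u^{(i)}_+(b)\,\nabla u^{(i)}(b),
\end{equation*}
and each bond contribution is non-negative by the elementary monotonicity $(a_+-b_+)(a-b)\ge 0$. Hence the drift contribution to $dF$ is $\le 0$, with only finitely many non-trivial bonds (those touching $\Lambda$) appearing in the sum.

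For the local times, I would regroup the four differentials at a fixed $x\in\Lambda$ as
\begin{equation*}
u^{(1)}_+\,d\ell^{(1),\Lambda} - u^{(1)}_+\,d\ell^{(1)} + (u^{(2)}_+-u^{(1)}_+)\,d\ell^{(2),\Lambda} + (u^{(1)}_+-u^{(2)}_+)\,d\ell^{(2)}.
\end{equation*}
The first vanishes because $\{u^{(1)}>0\}\subset\{\phi^{(1),\Lambda}>0\}$ while $d\ell^{(1),\Lambda}$ is supported on $\{\phi^{(1),\Lambda}=0\}$; the second is trivially $\le 0$. The remaining two cross terms are the crux of the argument and the main obstacle, since the sign of $d\ell^{(2)}$ appearing in the $\phi^{(1)}$-equation is unfavourable on its own. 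The resolution is that $d\ell^{(2),\Lambda}$ and $d\ell^{(2)}$ only charge the respective coincidence sets: on $\{d\ell^{(2),\Lambda}(x)>0\}$ one has $\phi^{(1),\Lambda}(x)=\phi^{(2),\Lambda}(x)$, whence $u^{(2)}-u^{(1)}=\phi^{(1)}-\phi^{(2)}\le 0$ and therefore $u^{(2)}_+\le u^{(1)}_+$; symmetrically, on $\{d\ell^{(2)}(x)>0\}$ we have $\phi^{(1)}=\phi^{(2)}$, giving $u^{(1)}-u^{(2)}\le 0$ and $u^{(1)}_+\le u^{(2)}_+$. Both cross terms are thus $\le 0$, which yields $dF\le 0$ and, with $F(0)=0$, forces $F\equiv 0$. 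Combined with the off-$\Lambda$ inequality this proves $\Phi^\Lambda_t\le\Phi_t$ throughout $\integer^d$.
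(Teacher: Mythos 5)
Your argument is correct and follows essentially the same route as the paper: the same difference process, the same Lyapunov functional $\sum_{x\in\Lambda}\sum_i\bigl(u^{(i)}_+(x)\bigr)^2$, nonpositivity of the drift term via summation by parts, and nonpositivity of the local-time terms from the fact that each local time charges only its coincidence set. Your treatment of the cross terms involving $d\ell^{(2),\Lambda}$ and $d\ell^{(2)}$ is in fact more transparent than the paper's appeal to the algebraic inequality $\{(a-b)^{+}-(c-d)^{+}\}\{(a-c)^{+}-(b-d)^{+}\}\ge0$, but it is the same mechanism.
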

\begin{proof}
	Since $0=\phi^{(i),\Lambda}_t(x)\le\phi^{(i)}_t(x)$ holds
	outside of $\Lambda$, it is sufficient 
	to show $\phi^{(i),\Lambda}_t(x)\le\phi^{(i)}_t(x)$ for $x\in\Lambda$.
	We define $\psi^{(i)}(x)=\phi^{(i),\Lambda}_t(x)-\phi^{(i)}_t(x)$
	for $x\in\integer^d$ and $i=1,2$. We then have 
	{\allowdisplaybreaks\begin{align*}
		&\sum_{i=1,2}\sum_{x\in\Lambda}
		\left(\psi^{(i)}_t(x)^{+}\right)^2 \\
		&\quad \le 2\sum_{i=1,2}\sum_{x\in\Lambda}
		\int_0^t \psi^{(i)}_s(x)^{+}
		\Delta\psi^{(i)}_s(x)\,ds \\
		&\qquad {}+
		2\sum_{x\in\Lambda}\int_0^t
		\psi^{(i)}_s(x)^{+}\left(d\ell^{(1),\Lambda}_t(x)-
		d\ell^{(1)}_t(x)\right)
		\\
		&\qquad {}+2\sum_{x\in\Lambda}\int_0^t
		\left(\psi^{(2)}_s(x)^{+}-\psi^{(1)}_s(x)^{+}\right)
		\left(d\ell^{(2),\Lambda}_t(x)-
		d\ell^{(2)}_t(x)\right) \\
		&\quad =:F_{1}+F_{2}+F_{3}.
	\end{align*}}%
	Here, by the definition of the local time $\ell^{(1),\Lambda}_t(x)$
	and $\ell^{(1)}_t(x)$, we obtain the second term $F_2$ is nonpositive.
	Noting that
	\[\left\{(a-b)^{+}-(c-d)^{+}\right\}\left\{(a-c)^{+}-(b-d)^{+}\right\}\ge0\]	holds for every $a,b,c,d\in\real$, the sign of $\psi^{(2)}_s(x)^{+}-\psi^{(1)}_s(x)^{+}$ is same as of
	\[\left(\phi_s^{(2),\Lambda}(x)-\phi_s^{(1),\Lambda}(x)\right)^+
	-\left(\phi_s^{(2)}(x)-\phi_s^{(1)}(x)\right)^+.\]
	We therefore obtain that the third term $F_3$ is also nonpositive.
	Using the summation-by-parts formula
	\begin{equation}\label{summation-by-parts}
		\sum_{x\in\Lambda}\phi(x)\Delta\psi(x)
		=-\sum_{b\in\overline{\Lambda^*}}\nabla\phi(b)\nabla\psi(b)
	\end{equation}
	for every $\phi,\psi\in\real^{\integer^d}$ such that $\phi(x)=0$
	on $\integer^d\smallsetminus\Lambda$ and
	\[(u^+-v^+)(u-v)\ge(u^+-v^+)^2\]
	for every $u,v\in\real$, we obtain that the first term $F_1$ is
	nonpositive.
	Summarizing above, we obtain $\psi_t^{(i)}(x)=0$ for every $t\ge0$
	and $x\in\Lambda$, which shows the conclusion.
\end{proof}

\subsection{Finite volume Gibbs measures and Dirichlet forms}
We introduce Gibbs measures associated with \eqref{SDE2}.
However, there does not exist an infinite volume Gibbs measure.
We  therefore introduce finite volume Gibbs measures only.

We at first introduce the finite volume Gibbs measure for the case
without any interaction. We define a probability
measure $\nu_{\Lambda,\xi}$ on $\real^\Lambda$ by
\[\nu_{\Lambda,\xi}(d\phi)=Z_{\Lambda,\xi}^{-1}
\exp(-H(\phi\wedge\xi))\,\prod_{x\in\Lambda}d\phi^+(x),\]
where $d\phi(x)$ is the Lebesgue measure on $[0,\infty)$.
This measure $\nu$ is the finite volume Gibbs measure corresponding
to the single interface on the wall.
We define $\bar\mu_{\Lambda,\xi}$ by $\bar\mu_{\Lambda,\xi}=
\nu_{\Lambda,\xi_1}\times\nu_{\Lambda,\xi_2}$, which is the finite volume
Gibbs measure associated with two interfaces on the wall
without any interaction. 
We define $\mu_{\Lambda,\xi}$ by $\mu_{\Lambda,\xi}=
\bar\mu_{\Lambda,\xi}(\cdot|\phi^{(2)}\ge\phi^{(1)})$, which is the finite
volume Gibbs measure corresponding to the dynamics \eqref{SDE2}
with $\xi\equiv\phi_0^\Lambda$. 
Since we only deal with the case $\xi\equiv0$, we simply denote
$\mu_{\Lambda,0}$ by $\mu_{\Lambda}$ if no confusion arises.

We shall introduce an approximating sequence for $\mu_{\Lambda}$
and the equation \eqref{SDE2}.
Let $\rho\in C^\infty_0(\real)$ be non-negative,
symmetric ($\rho(u)=\rho(-u)$), $\int_\real \rho(u)\,du=1$
and $\rho(u)=0$ if $|u|\ge1$. For $0<\delta\le1$, we define $\rho_\delta$
by $\rho_\delta(u):=\delta^{-1}\rho(\delta^{-1}u)$ for $u\in\real$.
We define $\chi_{\alpha}$ and $\chi_{\alpha,\delta}$ by
$\chi_{\alpha}(u)=\alpha^{-1}(u^-)^2/2$ and 
$\chi_{\alpha,\delta}(u)=(\rho_\delta\ast\chi_{\alpha}(\cdot+\delta))(u)$,
respectively.
For $\epsilon=(\epsilon_1,\epsilon_2),\,\epsilon_1,\epsilon_2>0$
and $\delta>0$, we define the probability measure $\mu_{\Lambda,\epsilon}$
and $\mu_{\Lambda,\epsilon,\delta}$ on $(\real^\Lambda)^2$ by
\[\frac{d\mu_{\Lambda,\epsilon}}{d\bar\mu_{\Lambda}}=
	Z^{-1}_{\Lambda,\epsilon}\exp\left(-\sum_{x\in\Lambda}W_{\epsilon}(\phi^{(1)}(x),\phi^{(2)}(x))\right),\]
and
\[\frac{d\mu_{\Lambda,\epsilon,\delta}}{d\bar\mu_{\Lambda}}=
Z^{-1}_{\Lambda,\epsilon,\delta}\exp\left(-\sum_{x\in\Lambda}W_{\epsilon,\delta}(\phi^{(1)}(x),\phi^{(2)}(x))\right),\]
respectively, where 
$W_{\epsilon}$ and $W_{\epsilon,\delta}$ is defined by
\begin{align*}
W_{\epsilon}(u,v)&=\chi_{\epsilon_1}(u)+\chi_{\epsilon_2}(v-u), \\
W_{\epsilon,\delta}(u,v)&=\chi_{\epsilon_1,\delta}(u)
+\chi_{\epsilon_2,\delta}(v-u)
\end{align*}
for $u,v\in\real$.
Here, $Z_{\Lambda,\epsilon}$ and $Z_{\Lambda,\epsilon,\delta}$
are normalizing constants.
We introduce the Dirichlet form $\mathscr{E}_\epsilon^\Lambda$ and 
$\mathscr{E}_{\epsilon,\delta}^\Lambda$ by
\[\mathscr{E}_\epsilon^\Lambda(F,G)=
\sum_{i=1,2}\sum_{x\in\Lambda}
\int_{(\real^\Lambda)^2}
\frac{\partial F}{\partial \phi^{(i)}(x)}
\frac{\partial G}{\partial \phi^{(i)}(x)}d\mu_\epsilon(d\phi)\]
and
\[\mathscr{E}_{\epsilon,\delta}^\Lambda(F,G)=
\sum_{i=1,2}\sum_{x\in\Lambda}
\int_{(\real^\Lambda)^2}
\frac{\partial F}{\partial \phi^{(i)}(x)}
\frac{\partial G}{\partial \phi^{(i)}(x)}d\mu_{\epsilon,\delta}(d\phi)\]
for $F,G\in C^2((\real^\Lambda)^2)$, respectively.
We note that $\mathscr{E}_\epsilon^\Lambda$ is the Dirichlet form
associated to SDEs \begin{equation}\label{eq2.1}
\left\{\begin{split}
	d\phi_t^{(1),\Lambda,\epsilon}(x)&=\Delta
	\phi_t^{(1),\Lambda,\epsilon}(x)\,dt+\sqrt{2}dw_t^{(1)}(x)
	+d\ell_t^{(1),\Lambda,\epsilon}(x)-d\ell_t^{(2),\Lambda,\epsilon}(x), \\
	d\phi_t^{(2),\Lambda,\epsilon}(x)&=
	\Delta\phi_t^{(2),\Lambda,\epsilon}(x)\,dt+\sqrt{2}dw_t^{(2)}(x)
	+d\ell_t^{(2),\Lambda,\epsilon}(x)
\end{split}\right.
\end{equation}
for $x\in\Lambda$, where $\ell_t^{(1),\epsilon}(x)$ $\ell_t^{(2),\epsilon}(x)$ are defined by
\begin{align*}
	\ell_t^{(1),\Lambda,\epsilon}(x)&=
	\epsilon_1^{-1}\int_0^t
	\phi_s^{(1),\Lambda,\epsilon}(x)^{-}\,ds, \label{eq4.2} \\
	\ell_t^{(2),\Lambda,\epsilon}(x)&=
	\epsilon_2^{-1}\int_0^t
	\left(\phi_s^{(2),\Lambda,\epsilon}(x)-\phi_s^{(1),\Lambda,\epsilon}(x)\right)^{-}\,ds. 
\end{align*}
We also note that $\mathscr{E}^\Lambda_{\epsilon,\delta}$ is the 
the Dirichlet form associated to SDEs
\begin{equation}\label{eq2.2}
\left\{\begin{split}
	d\phi_t^{(1),\Lambda,\epsilon,\delta}(x)&=\Delta
	\phi_t^{(1),\Lambda,\epsilon,\delta}(x)\,dt+\sqrt{2}dw_t^{(1)}(x)
	+d\ell_t^{(1),\Lambda,\epsilon,\delta}(x)-d\ell_t^{(2),\Lambda,\epsilon,\delta}(x), \\
	d\phi_t^{(2),\Lambda,\epsilon,\delta}(x)&=
	\Delta\phi_t^{(2),\Lambda,\epsilon,\delta}(x)\,dt+\sqrt{2}dw_t^{(2)}(x)
	+d\ell_t^{(2),\Lambda,\epsilon,\delta}(x)
\end{split}\right.
\end{equation}
for $x\in\Lambda$, where $\ell_t^{(1),\epsilon}(x)$ $\ell_t^{(2),\epsilon}(x)$ are defined by
\begin{align*}
	\ell_t^{(1),\Lambda,\epsilon}(x)&=
	-\int_0^t
	\chi'_{\epsilon_1,\delta}
	\left(\phi_s^{(1),\Lambda,\epsilon,\delta}(x)\right)\,ds, \label{eq4.2} \\
	\ell_t^{(2),\Lambda,\epsilon}(x)&=
	-\int_0^t\chi'_{\epsilon_2,\delta}
	\left(\phi_s^{(2),\Lambda,\epsilon,\delta}(x)
	-\phi_s^{(1),\Lambda,\epsilon,\delta}(x)\right)\,ds. 
\end{align*}
It is easy to show that there exist unique strong solutions
for \eqref{eq2.1} and \eqref{eq2.2}, respectively,
since all coefficients are Lipschitz continuous.
Noting that $W_{\epsilon,\delta}$ converges
to $W_{\epsilon}$ uniformly in $\real^2$ for fixed
$\epsilon=(\epsilon_1,\epsilon_2),\,\epsilon_1,\epsilon_2>0$,
it is easy to see the following identity:
\begin{equation}\label{eq2.4}
	\lim_{\delta\downarrow0}
	E\left[\sum_{x\in\Lambda}\sum_{i=1,2}
	\left|\phi_t^{(i),\Lambda,\epsilon,\delta}(x)
	-\phi_t^{(i),\Lambda,\epsilon}(x)\right|^2\right]=0
\end{equation}
for every $t\ge0$. 
We can also show that
$\Phi_t^{\Lambda,\epsilon}$ converges to $\Phi_t^\Lambda$
as $\epsilon_1\downarrow0$ and $\epsilon_2\downarrow0$
in the following sense:
\begin{equation}\label{eq2.3}
		\lim_{\epsilon_2\downarrow0}
		\lim_{\epsilon_1\downarrow0}
		E\left[\sum_{x\in\Lambda}\sum_{i=1,2}
		\left|\phi_t^{(i),\Lambda}(x)
		-\phi_t^{(i),\Lambda,\epsilon}(x)\right|^2\right]=0
\end{equation}
for every $t>0$ and $\Lambda\subset\integer^d$, see Theorem~\ref{existence-finite} for details.
Furthermore, we can show
\begin{equation}\label{eq2.3d}
		\lim_{\Lambda\uparrow\integer^d}
		E\left[\sum_{i=1,2}
		\left|\phi_t^{(i)}(x)
		-\phi_t^{(i),\Lambda}(x)\right|^2\right]=0
\end{equation}
for every $x\in\integer^d$ and $t>0$, which will be claimed
in Theorem~\ref{existence-infinite}.
These identities imply that the estimate for the variance of $\phi_t$
can be reduced to that for $\phi_t^{\Lambda,\epsilon,\delta}$.
We shall discuss an important property
of $\phi_t^{\Lambda,\epsilon,\delta}$,
comparison theorem with respect to initial datum.
This plays the key role in the proof of Theorem~\ref{asymp-var}.
\begin{prop}\label{semigr-lip}
	Let $\Phi_t^{\Lambda,\epsilon,\delta}$ and 
	$\tilde\Phi_t^{\Lambda,\epsilon,\delta}$ be the solution
	of \eqref{eq2.2} with initial data satisfying
	$\Phi_0^{\Lambda,\epsilon,\delta}\le
	\tilde\Phi_0^{\Lambda,\epsilon,\delta}$. We then have
	$\Phi_t^{\Lambda,\epsilon,\delta}\le
	\tilde\Phi_t^{\Lambda,\epsilon,\delta}$ and
	\begin{equation}\label{2.11b}
		\sum_{i=1,2}\left(
		\tilde\phi_t^{(i)\Lambda,\epsilon,\delta}(x)
		-\phi_t^{(i),\Lambda,\epsilon,\delta}(x)\right)
		\le \sum_{y\in\Lambda}\sum_{i=1,2}p_t(x,y)\left(
		\tilde\phi_0^{(i)\Lambda,\epsilon,\delta}(y)
		-\phi_0^{(i),\Lambda,\epsilon,\delta}(y)\right)
	\end{equation}
	for every $t>0$ and $x\in\Lambda$.
\end{prop}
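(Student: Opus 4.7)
The plan is to work directly with the difference $D_t^{(i)}(x):=\tilde\phi_t^{(i),\Lambda,\epsilon,\delta}(x)-\phi_t^{(i),\Lambda,\epsilon,\delta}(x)$. Since both solutions are driven by the same Brownian motions, the noise cancels and $D_t^{(i)}$ satisfies the deterministic system
\begin{align*}
 \tfrac{d}{dt}D_t^{(1)}(x) &= \Delta D_t^{(1)}(x) - A_t(x) + B_t(x), \\
 \tfrac{d}{dt}D_t^{(2)}(x) &= \Delta D_t^{(2)}(x) - B_t(x),\qquad x\in\Lambda,
\end{align*}
where $A_t(x):=\chi'_{\epsilon_1,\delta}(\tilde\phi_t^{(1)}(x))-\chi'_{\epsilon_1,\delta}(\phi_t^{(1)}(x))$ and $B_t(x):=\chi'_{\epsilon_2,\delta}(\tilde\phi_t^{(2)}(x)-\tilde\phi_t^{(1)}(x))-\chi'_{\epsilon_2,\delta}(\phi_t^{(2)}(x)-\phi_t^{(1)}(x))$. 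The crucial observation driving the whole argument is that each $\chi'_{\epsilon_i,\delta}$ is non-decreasing (being the mollification of $u\mapsto-\epsilon_i^{-1}u^-$), so $A_t$ shares the sign of $D_t^{(1)}$ and $B_t$ shares the sign of $D_t^{(2)}-D_t^{(1)}$.

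For the monotonicity $\Phi_t^{\Lambda,\epsilon,\delta}\le\tilde\Phi_t^{\Lambda,\epsilon,\delta}$ I would essentially repeat the energy estimate of Proposition~\ref{comparison3}. Differentiating the functional $\sum_{i=1,2}\sum_{x\in\Lambda}((D_t^{(i)}(x))^-)^2$, the Laplacian contribution is non-positive by the summation-by-parts formula \eqref{summation-by-parts} combined with the elementary inequality $(u^--v^-)(u-v)\le-(u^--v^-)^2$. The two drift contributions are also non-positive: using the sign observations above together with the auxiliary identity $\{(a-b)^+-(c-d)^+\}\{(a-c)^+-(b-d)^+\}\ge 0$ already exploited in Proposition~\ref{comparison3}, each drift term in $\frac{d}{dt}\sum((D_t^{(i)})^-)^2$ carries the correct sign. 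Since the functional vanishes at $t=0$, it remains zero for all $t\ge 0$, and hence $D_t^{(i)}\ge 0$.

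For the quantitative bound \eqref{2.11b}, the key step is to add the two difference equations, which cancels the interaction term $B_t$:
\[
 \tfrac{d}{dt}S_t(x)=\Delta S_t(x)-A_t(x),\qquad S_t(x):=D_t^{(1)}(x)+D_t^{(2)}(x).
\]
By the monotonicity just established and the monotonicity of $\chi'_{\epsilon_1,\delta}$, the source satisfies $A_t(x)\ge 0$. Duhamel's formula applied to this affine linear parabolic equation yields
\[
 S_t(x)=\sum_{y}p_t(x,y)S_0(y)-\int_0^t\sum_{y}p_{t-s}(x,y)A_s(y)\,ds,
\]
so positivity of the discrete heat kernel together with $A_s\ge 0$ gives $S_t(x)\le\sum_{y}p_t(x,y)S_0(y)$, which is \eqref{2.11b}. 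The only substantive obstacle is the cancellation of $B_t$ when the two equations are added: without it one would be stuck with a coupled nonlinear system for the $D^{(i)}$ with no linear envelope. The rest of the argument reduces to sign-tracking for the mollified wall potentials and positivity of a linear parabolic kernel.
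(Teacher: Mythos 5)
Your proposal is correct and follows essentially the same route as the paper: monotonicity via the energy estimate on $\sum_{i,x}((D_t^{(i)}(x))^-)^2$ using the monotonicity of $\chi'_{\epsilon_i,\delta}$, and then the bound \eqref{2.11b} by summing the two components so that the interaction term $B_t$ cancels, leaving a nonnegative source that Duhamel's formula (the paper phrases this as testing against $\alpha_s(y)=p^\Lambda_{t-s}(x,y)$) turns into the heat-kernel bound. The only cosmetic difference is that the paper works with the Dirichlet kernel $p^\Lambda_t$ of the killed walk, which is the one actually produced by the equation on $\Lambda$ with frozen exterior, and then invokes $p^\Lambda_t(x,y)\le p_t(x,y)$; your Duhamel step should be stated with $p^\Lambda$ for the same reason.
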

\begin{proof}
	We shall at first show the first inequality in \eqref{2.11b}.
	We define $\psi^{(i)}$ by
	$\psi_t^{(i)}=\tilde\phi_t^{(i),\Lambda,\epsilon,\delta}
	-\phi_t^{(i),\Lambda,\epsilon,\delta}$.
	Differentiating
	$\sum_{i=1,2}\sum_{x\in\Lambda}
	(\psi^{(i)}_t(x)^{-})^2$
	in $t$ and integrating in $t$, we obtain
	{\allowdisplaybreaks\begin{align*}
		&\sum_{i=1,2}\sum_{x\in\Lambda}
		\left(\psi^{(i)}_t(x)^{-}\right)^2 \\
		&\quad \le -2\sum_{i=1,2}\sum_{x\in\Lambda}
		\int_0^t \psi^{(i)}_s(x)^{-}
		\Delta\psi^{(i)}_s(x)\,ds \\
		&\qquad {}-
		2\sum_{x\in\Lambda}\int_0^t
		\psi^{(1)}_s(x)^{-}
		\left(\chi'_{\epsilon_1,\delta}
		\left(\phi_s^{(1),\Lambda,\epsilon,\delta}(x)\right)
		-\chi'_{\epsilon_1,\delta}
		\left(\tilde\phi_s^{(1),\Lambda,\epsilon,\delta}(x)\right)\right)\,ds
		\\
		&\qquad {}-2\sum_{x\in\Lambda}\int_0^t
		\left(\psi^{(2)}_s(x)^{-}-\psi^{(1)}_s(x)^{-}\right) \\
		&\qquad\qquad\qquad 
		\times\left(\chi'_{\epsilon_2,\delta}
		\left(\phi_s^{(2),\Lambda,\epsilon,\delta}(x)
		-\phi_s^{(1),\Lambda,\epsilon,\delta}(x)\right)
		-\chi'_{\epsilon_2,\delta}
		\left(\tilde\phi_s^{(2),\Lambda,\epsilon,\delta}(x)
		-\tilde\phi_s^{(1),\Lambda,\epsilon,\delta}(x)\right)\right)\,ds \\
		&=:F_{1}+F_{2}+F_{3}.
	\end{align*}}%
	Noting that $\chi'_{\epsilon,\delta}(u)$ is nondecreasing in $u\in\real$
	and that
	\[\left\{(a-b)^{-}-(c-d)^{-}\right\}\left\{(a-c)^{-}-(b-d)^{-}\right\}\ge0\]	holds for every $a,b,c,d\in\real$, we obtain that
	$F_2$ and $F_3$ in the right hand side are nonpositive.
	In order to obtain the first inequality in \eqref{2.11b},
	we only need to show
	that the first term $F_1$ is nonpositive.
	Using \eqref{summation-by-parts} and
	$(u-v)(u^--v^-)\le -(u^--v^-)^2$ for every $u,v\in\real^d$, we get
	{\allowdisplaybreaks\begin{align*}
		\sum_{x\in\Lambda}&
		\psi^{(i)}_s(x)^{-}
		\Delta\psi^{(i)}_s(x)\ge \frac{1}{2}\sum_{b\in\overline{\Lambda^*}}
		\left(\nabla\left(\psi^{(i)}\right)^{-}(b)\right)^2\ge 0.
	\end{align*}}%
	We have used $\psi^{(i)}_s(x)^{-}=0$ for every $x\in\integer^d
	\smallsetminus\Lambda$.
	This shows that the term $F_1$ is nonpositive.

	We next show the inequality \eqref{2.11b}.
	Assuming $\alpha_s(x,y)$ be continuous differentiable in
	$s\in[0,t]$, we then obtain
	\begin{align*}
		\sum_{i=1,2}\alpha_t(y)\psi_t^{(i)}(y)
		&=\sum_{i=1,2}\alpha_0(y)\psi_0^{(i)}(y) \\
		&\quad {}+\sum_{i=1,2}\int_0^t
		\frac{d}{ds}\alpha_s(y)\psi_s^{(i)}(y)\,ds \\
		&\quad {}+\sum_{i=1,2}\int_0^t \alpha_s(y)
		\Delta\psi_s^{(i)}(y)\,ds \\
		&\quad {}-\int_0^t \alpha_s(y)\left(
		\chi'_{\epsilon_1,\delta}(\tilde\phi_s^{(1),\Lambda,\epsilon}(y))
		-\chi'_{\epsilon_1,\delta}(\phi_s^{(1),\Lambda,\epsilon}(y))\right)\,ds
	\end{align*}
	Note that
	\begin{align*}
		\frac{\partial W_{\epsilon,\delta}}{\partial u}+
		\frac{\partial W_{\epsilon,\delta}}{\partial v}=
		\chi'_{\epsilon_1,\delta}(u)
	\end{align*}
	holds by definition of $W_{\epsilon,\delta}$.
	Since $\chi'_{\epsilon_1,\delta}(u)$ is nondecreasing in $u\in\real$,
	we obtain that the fifth terms is nonpositive.
	Taking the sum over $y\in\Lambda$, we get
	\begin{align*}
		\sum_{i=1,2}\sum_{x\in\Lambda}\alpha_t(y)\psi_t^{(i)}(y) 
		&\le\sum_{i=1,2}\sum_{x\in\Lambda}\alpha_0(y)\psi_0^{(i)}(y) \\
		&\qquad {}+\sum_{i=1,2}\sum_{x\in\Lambda}\int_0^t
		\frac{d}{ds}\alpha_s(y)\psi_s^{(i)}(y)\,ds \\
		&\qquad {}+\sum_{i=1,2}\sum_{x\in\Lambda}\int_0^t \Delta\alpha_s(x)
		\psi_s^{(i)}(x)\,ds.
	\end{align*}
	Here, we have applied \eqref{summation-by-parts} twice.
	Choosing $\alpha_s(y)=p_{t-s}^{\Lambda}(x,y)$,
	we obtain the second and third terms cancel out,
	since $\alpha_s(y)$ is the solution of the forward equation
	\[
	\begin{cases}
		\frac{d}{dt}\alpha_s(y)=-\Delta\alpha_s(y),&y\in\Lambda, \\
		\alpha_s(y)=0,&y\in\integer^d\smallsetminus\Lambda,\,t\ge0 \\
		\alpha_t(y)=\delta_x(y),& y\in\Lambda.
	\end{cases}
	\]
	Using trivial inequality $p_t^\Lambda(x,y)\le p_t(x,y)$, we get
	the conclusion.
\end{proof}

\subsection{Approximation scheme for the dynamics on the finite set}
\label{sec-approx}
For a finite subset $\Lambda\subset\integer^d$, let us consider the dynamics 
$\Phi_t^\Lambda=(\phi_t^{(1),\Lambda},\phi_t^{(2),\Lambda})$
governed by SDEs \eqref{SDE2}.
In this subsection, we omit the domain $\Lambda$ for simplicity of notations
when no confusion arises.

Since the state space of \eqref{SDE2} is $|\Lambda|$-fold of two dimensional
cones and therefore convex, we can apply Tanaka's result \cite{T79},
and immediately obtain the existence and uniqueness of the strong solution for
our equation. However, in order to show the lower bound,
we need the nice approximation to the solution, see Section~\ref{sec-lower}.
We shall introduce an approximation scheme for solutions for our equation
\eqref{SDE2} from that for the equations with weak reflection terms.
Our goal in this subsection is the following:
\begin{theorem}\label{existence-finite}
	Let $\Phi_t=(\phi_t^{(1)},\phi_t^{(2)})$ and
	$\Phi^\epsilon_t=(\phi_t^{(1),\epsilon},\phi_t^{(1),\epsilon})$
	be the solutions for \eqref{SDE2} and \eqref{eq2.1}
	with common Brownian motions and
	initial datum $\Phi_0=(\phi^{1}_0,\phi^{2}_0)$, respectively.
	We then have
	\begin{equation}\label{eq5.1b}
	\phi_t^{(i)}(x)=\lim_{\epsilon_2\downarrow0}\lim_{\epsilon_1\downarrow0}\phi_t^{(i),\epsilon}(x)
	\end{equation}
	holds almost surely for every $i=1,2,\,t>0$ and $x\in\Lambda$.
	Furthermore, we obtain the identity \eqref{eq2.3}.
\end{theorem}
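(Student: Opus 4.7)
The plan is to adapt the standard penalization (Yosida--Tanaka) approximation scheme for reflected SDEs in convex domains to the coupled cone reflection here. The state space $\{(\phi^{(1)},\phi^{(2)})\in(\real^\Lambda)^2;\,0\le\phi^{(1)}(x)\le\phi^{(2)}(x),\,x\in\Lambda\}$ is a convex polyhedral cone in $\real^{2|\Lambda|}$, so Tanaka's theorem \cite{T79} yields the unique strong solution $\Phi^\Lambda_t$ driven by the prescribed Brownian motions. Hence it suffices to establish the $L^2$ convergence \eqref{eq2.3}; the almost sure statement \eqref{eq5.1b} then follows along subsequences together with the uniqueness of the Skorokhod limit.

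First I would establish uniform a priori bounds. Applying Ito's formula to $\sum_{i,x\in\Lambda}(\phi_t^{(i),\Lambda,\epsilon}(x))^2$ using \eqref{eq2.1}, the Laplacian contribution is nonpositive by \eqref{summation-by-parts}, and the penalty contributions are dissipative, producing terms of the form $-2\epsilon_i^{-1}\int_0^t\sum_x((\,\cdot\,)^-)^2\,ds$ thanks to the identity $u\cdot u^-=-(u^-)^2$. Gronwall then yields $\sup_\epsilon\sup_{t\le T}E[\sum_{i,x}|\phi_t^{(i),\Lambda,\epsilon}(x)|^2]<\infty$, and combining the dissipative term with Cauchy--Schwarz gives uniform bounds on $E[(\ell_t^{(i),\Lambda,\epsilon}(x))^2]$.

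Next I would apply Ito's formula to $\sum_{i,x\in\Lambda}(\phi_t^{(i),\Lambda,\epsilon}(x)-\phi_t^{(i),\Lambda}(x))^2$. The Brownian martingales cancel since both processes are driven by the same family $\{w_t^{(i)}(x)\}$, and the discrete Laplacian cross term is nonpositive by \eqref{summation-by-parts}. What remains are two local-time cross terms. For the wall reflection one uses the Skorokhod conditions $\phi^{(1),\Lambda}\ge 0$ and $\int_0^\infty \phi^{(1),\Lambda}(x)\,d\ell^{(1),\Lambda}(x)=0$, together with $u\cdot u^-=-(u^-)^2$, to split the term into manifestly nonpositive pieces plus an error $\int_0^t\sum_x (\phi_s^{(1),\epsilon}(x))^-\,d\ell_s^{(1),\Lambda}(x)$ that vanishes as $\epsilon_1\downarrow 0$ by Cauchy--Schwarz together with Step~1. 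The exclusion cross term is handled analogously, combining the Skorokhod condition $\int_0^\infty (\phi^{(2),\Lambda}-\phi^{(1),\Lambda})(x)\,d\ell^{(2),\Lambda}(x)=0$ with the joint monotonicity inequality $\{(a-b)^- -(c-d)^-\}\{(a-c)-(b-d)\}\ge 0$ already exploited in Proposition~\ref{semigr-lip}. Sending $\epsilon_1\downarrow 0$ first and then $\epsilon_2\downarrow 0$, and invoking Gronwall's inequality, yields \eqref{eq2.3}.

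The main obstacle is the exclusion cross term: because the reflection at the cone face $\{\phi^{(2)}=\phi^{(1)}\}$ couples both interfaces, it cannot be handled one component at a time, and a favorable sign is extracted only through the joint monotonicity inequality above. It is precisely this coupling that forces the two-step order of the limits $\epsilon_1\downarrow 0$ before $\epsilon_2\downarrow 0$ stated in the theorem, since the estimate for the $\chi'_{\epsilon_2}$-penalty is nonpositive only after the outer (wall) component has been linearized by fixing $\epsilon_2$ and first letting $\epsilon_1\downarrow 0$.
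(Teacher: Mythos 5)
Your route is genuinely different from the paper's. You run the classical penalization estimate: It\^o's formula on $\sum_{i,x}(\phi_t^{(i),\epsilon}(x)-\phi_t^{(i)}(x))^2$, sign extraction from the Laplacian and penalty cross terms, and Gronwall. The paper instead never compares $\Phi^\epsilon_t$ with $\Phi_t$ directly; it proves that $\Phi^\epsilon_t$ is \emph{monotone} in $\epsilon_1$ (Lemma~\ref{lem5.2a}), and --- since monotonicity in $\epsilon_2$ fails for $\Phi^\epsilon_t$ itself --- rotates the pair $(\phi^{(1),\epsilon},\phi^{(2),\epsilon})$ clockwise by $45$ degrees and proves monotonicity of the rotated process in $\epsilon_2$ (Lemma~\ref{lem5.4a}). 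The a.s.\ existence of the iterated limit in \eqref{eq5.1b} is then immediate (monotone limits), the uniform bounds of Lemma~\ref{lem5.3a} identify the limit as the solution of \eqref{SDE2}, and \eqref{eq2.3} follows by monotone convergence. This rotation trick is the one idea specific to the two-layer exclusion that your proposal does not reproduce.

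The concrete gaps in your argument are the following. First, \eqref{eq5.1b} asserts that the \emph{inner} limit $\lim_{\epsilon_1\downarrow0}\phi_t^{(i),\epsilon}(x)$ exists a.s.\ for each fixed $\epsilon_2>0$ and that the iterated limit then exists a.s.; an $L^2$ bound of the form $E\|\Phi^\epsilon_t-\Phi_t\|^2\le C(\epsilon_1+\epsilon_2)$ gives neither. For fixed $\epsilon_2$ it only shows $\Phi^\epsilon_t$ stays within $O(\sqrt{\epsilon_2})$ of $\Phi_t$ in $L^2$, not that it converges at all as $\epsilon_1\downarrow0$, and $L^2$ convergence in any case yields a.s.\ convergence only along subsequences. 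Without some substitute for the monotonicity lemmas you have not proved the statement as formulated. Second, your error term $\int_0^t\sum_x(\phi_s^{(1),\epsilon}(x))^-\,d\ell_s^{(1),\Lambda}(x)$ is integrated against the singular measure $d\ell^{(1),\Lambda}$, so the dissipation bound from Step~1, which controls only $\epsilon_1^{-1}E\bigl[\int_0^t((\phi_s^{(1),\epsilon})^-)^2ds\bigr]$, i.e.\ a $ds$-integrated quantity, does not reach it; you need something like $E[\sup_{s\le t}((\phi_s^{(1),\epsilon}(x))^-)^2]\to0$, which is an additional (standard but nontrivial) estimate you have not supplied. Third, your closing explanation for the order of the limits --- that the $\chi'_{\epsilon_2}$ cross term acquires a sign only after $\epsilon_1\downarrow0$ --- is not a real mechanism: in the direct comparison both penalty cross terms have the identical structure $(g^\epsilon-g)\,(\epsilon^{-1}(g^\epsilon)^-ds-d\ell)$ with $g\ge0$ and $\int g\,d\ell=0$, and are handled identically. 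The order of the limits in the theorem comes from the monotonicity structure (direct monotonicity in $\epsilon_1$, monotonicity in $\epsilon_2$ only after rotation), not from a sign obstruction in the energy estimate.
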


Our first observation is on the monotonicity for $\Phi_t^{\epsilon}$
in $\epsilon_1$, which plays key role in the proof
of Theorem~\ref{existence-finite}.
\begin{lemma}\label{lem5.2a}
	Fix $\epsilon=(\epsilon_1,\epsilon_2),
	\epsilon'=(\epsilon'_1,\epsilon'_2)$ such that
	$\epsilon_1\ge\epsilon'_1$ and $\epsilon_2=\epsilon'_2$.
	Let $\Phi_t^{\Lambda,\epsilon}$ and $\Phi_t^{\Lambda,\epsilon'}$
	be solutions of \eqref{eq2.1} with common Brownian motions and
	initial datum.
	We then have $\Phi_t^{\Lambda,\epsilon}\le\Phi_t^{\Lambda,\epsilon'}$
	for every $t>0$.
\end{lemma}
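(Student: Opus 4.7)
The plan is to adapt the energy estimate used in Proposition~\ref{comparison3} and Proposition~\ref{semigr-lip}, now comparing two solutions with the same initial data and the same driving Brownian motions but different reflection strengths on the lower interface. Set $\psi^{(i)}_t(x):=\phi_t^{(i),\Lambda,\epsilon}(x)-\phi_t^{(i),\Lambda,\epsilon'}(x)$; the goal is to show $\sum_{i,x}(\psi^{(i)}_t(x)^+)^2\equiv 0$, which is equivalent to $\Phi_t^{\Lambda,\epsilon}\le\Phi_t^{\Lambda,\epsilon'}$. Differentiating this quantity in $t$ and using the explicit formulas $d\ell_t^{(1),\Lambda,\epsilon}(x)/dt=\epsilon_1^{-1}(\phi_t^{(1),\Lambda,\epsilon}(x))^{-}$ and $d\ell_t^{(2),\Lambda,\epsilon}(x)/dt=\epsilon_2^{-1}(\phi_t^{(2),\Lambda,\epsilon}(x)-\phi_t^{(1),\Lambda,\epsilon}(x))^{-}$ (the Brownian parts cancel because of the common driving noise) splits the right hand side into three pieces: a Laplacian piece $F_1$, a piece $F_2$ involving $\epsilon_1^{-1}(\phi^{(1),\epsilon})^{-}_s-\epsilon_1'^{-1}(\phi^{(1),\epsilon'})^{-}_s$ tested against $\psi^{(1)+}$, and an interaction piece $F_3$ involving $(\phi^{(2),\epsilon}-\phi^{(1),\epsilon})^{-}_s-(\phi^{(2),\epsilon'}-\phi^{(1),\epsilon'})^{-}_s$ tested against $\psi^{(2)+}-\psi^{(1)+}$.

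The Laplacian piece $F_1$ is nonpositive by the summation-by-parts identity \eqref{summation-by-parts} and the pointwise inequality $(u-v)(u^+-v^+)\ge(u^+-v^+)^2$, exactly as in Proposition~\ref{comparison3}; note that $\psi^{(i)+}$ vanishes on $\integer^d\smallsetminus\Lambda$ because the two solutions share their initial data and keep it frozen outside $\Lambda$, so summation-by-parts is applicable.

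For $F_2$, the essential observation is that on the set $\{\psi^{(1)}_s(x)>0\}$ we have $\phi^{(1),\epsilon}_s(x)>\phi^{(1),\epsilon'}_s(x)$, so the monotonicity of $u\mapsto u^{-}$ gives $(\phi^{(1),\epsilon}_s(x))^{-}\le(\phi^{(1),\epsilon'}_s(x))^{-}$; combined with the hypothesis $\epsilon_1^{-1}\le\epsilon_1'^{-1}$ this yields $\epsilon_1^{-1}(\phi^{(1),\epsilon}_s(x))^{-}\le\epsilon_1'^{-1}(\phi^{(1),\epsilon'}_s(x))^{-}$, so the bracket has sign opposite to $\psi^{(1)+}$ and $F_2\le0$. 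This is the unique step where the assumption $\epsilon_1\ge\epsilon_1'$ is consumed. For $F_3$ (where $\epsilon_2=\epsilon_2'$ factors out), the needed ingredient is the pointwise algebraic inequality
\[\bigl\{(a-b)^{-}-(c-d)^{-}\bigr\}\bigl\{(a-c)^{+}-(b-d)^{+}\bigr\}\le0,\]
which I would verify by a case analysis on the signs of $a-c$ and $b-d$, reducing the cross case to $(u-v)(u^+-v^+)\ge0$. Applied with $(a,b,c,d)=(\phi^{(2),\epsilon},\phi^{(1),\epsilon},\phi^{(2),\epsilon'},\phi^{(1),\epsilon'})$ this gives $F_3\le0$.

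Combining $F_1+F_2+F_3\le0$ with $\psi^{(i)}_0\equiv0$ yields $\sum_{i,x}(\psi^{(i)}_t(x)^+)^2\equiv0$, hence $\Phi_t^{\Lambda,\epsilon}\le\Phi_t^{\Lambda,\epsilon'}$. The principal technical obstacle is the bookkeeping for the mixed $+/-$ algebraic inequality driving $F_3$: it is a natural cousin of the all-plus version used in Proposition~\ref{comparison3} and of the all-minus version used in Proposition~\ref{semigr-lip}, but the pairing is not symmetric, so the case check has to be done cleanly. Everything else is a direct reuse of the machinery already set up in this section.
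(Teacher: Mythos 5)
Your proof is correct and follows essentially the same energy argument as the paper: the same decomposition into $F_1+F_2+F_3$, the same summation-by-parts bound for $F_1$, the monotonicity of $u\mapsto u^-$ combined with $\epsilon_1^{-1}\le(\epsilon'_1)^{-1}$ for $F_2$, and the mixed $+/-$ algebraic inequality for $F_3$ (which does hold, most quickly because $(a-b)-(c-d)=(a-c)-(b-d)$ while $u\mapsto u^+$ is nondecreasing and $u\mapsto u^-$ is nonincreasing). The only difference is one of orientation: the paper tracks $\sum_{i,x}(\psi^{(i)}_t(x)^-)^2$ for $\psi^{(i)}=\phi^{(i),\epsilon}-\phi^{(i),\epsilon'}$, whereas you track $\sum_{i,x}(\psi^{(i)}_t(x)^+)^2$ --- and your choice is the one consistent with the asserted conclusion $\Phi_t^{\Lambda,\epsilon}\le\Phi_t^{\Lambda,\epsilon'}$ and with the direction $\epsilon_1\ge\epsilon'_1$ of the hypothesis, so your sign bookkeeping is in fact the cleaner of the two.
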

\begin{proof}
	The proof is quite parallel to that of Lemma~\ref{semigr-lip}.
	We define $\psi_t^{(i)}$ by
	$\psi_t^{(i)}=\phi_t^{(i),\epsilon}
	-\phi_t^{(i),\epsilon'}$.
	Differentiating
	$\sum_{i=1,2}\sum_{x\in\Lambda}
	(\psi^{(i)}_t(x)^{-})^2$
	in $t$ and integrating in $t$, we obtain
	{\allowdisplaybreaks\begin{align*}
		&\sum_{i=1,2}\sum_{x\in\Lambda}
		\left(\psi^{(i)}_t(x)^{-}\right)^2 \\
		&\quad \le -2\sum_{i=1,2}\sum_{x\in\Lambda}
		\int_0^t \psi^{(i)}_s(x)^{-}
		\Delta\psi^{(i)}_s(x)\,ds \\
		&\qquad {}-
		2\sum_{x\in\Lambda}\int_0^t
		\psi^{(1)}_s(x)^{-}
		\left(
		\epsilon_1^{-1}\left(\phi_s^{(1),\epsilon}(x)\right)^{-}
		-{\epsilon'_1}^{-1}\left(\phi_s^{(1),\epsilon'}(x)\right)^{-}
		\right)\,ds \\
		&\qquad {}-2\sum_{x\in\Lambda}\int_0^t
		\left(\psi^{(2)}_s(x)^{-}-\psi^{(1)}_s(x)^{-}\right) \\
		&\qquad\qquad\qquad 
		\times\left(\epsilon_2^{-1}
		\left(\phi_s^{(2),\Lambda,\epsilon,\delta}(x)
		-\phi_s^{(1),\Lambda,\epsilon,\delta}(x)\right)^{-}
		-\epsilon_2^{-1}
		\left(\phi_s^{(2),\epsilon'}(x)
		-\phi_s^{(1),\epsilon'}(x)\right)\right)\,ds \\
		&=:F_{1}+F_{2}+F_{3}.
	\end{align*}}%
	The same observation as in the proof of Lemma~\ref{semigr-lip}
	shows that $F_1$ and $F_3$ are nonpositive. 
	Here, since we have assumed $\epsilon_1\ge \epsilon_1'$, we obtain
	\begin{align*}
		F_2&\le -
		2\sum_{x\in\Lambda}\int_0^t
		\psi^{(1)}_s(x)^{-}
		\left(
		\epsilon_1^{-1}\left(\phi_s^{(1),\epsilon}(x)\right)^{-}
		-\epsilon_1^{-1}\left(\phi_s^{(1),\epsilon'}(x)\right)^{-}
		\right)\,ds.
	\end{align*}
	Using the same observation as in the proof of Lemma~\ref{semigr-lip}
	again, we obtain that $F_2$ is also nonpositive.
	Summarizing the above, we finally get 
	\[\sum_{i=1,2}\sum_{x\in\Lambda}
	(\psi^{(i)}_t(x)^{-})^2=0\]
	for every $t\ge0$, which indicates the conclusion.
\end{proof}

Lemma~\ref{lem5.2a} guarantees that
there exists a limit $\Phi_t^{(0,\epsilon_2)}$ of $\Phi_t^\epsilon$
as $\epsilon_1\downarrow0$.
Though we want to take the limit as $\epsilon_2\downarrow0$ also,
we can not expect the monotonicity in $\epsilon_2$ as in
Lemma~\ref{lem5.2a}. In order to establish the monotonicity
in some sense, let us transform the solution $\Phi_t^\epsilon$
as follows: we define $\Psi_t^\epsilon=(\psi_t^{(1),\epsilon},\psi_t^{(2),\epsilon})\in\real^\Lambda\times \real^\Lambda$ by
\begin{align*}
	\psi_t^{(1),\epsilon}(x)&=\frac{1}{\sqrt2}\left(
	\phi_t^{(1),\epsilon}(x)+\phi_t^{(2),\epsilon}(x)\right) \\
	\psi_t^{(2),\epsilon}(x)&=\frac{1}{\sqrt2}\left(
	-\phi_t^{(1),\epsilon}(x)+\phi_t^{(2),\epsilon}(x)\right).
\end{align*}
for every $x\in\Lambda$ and $t\ge0$, that is,
$\Psi_t^\epsilon(x)=(\psi_t^{(1),\epsilon}(x),\psi_t^{(2),\epsilon}(x))$ 
is the rotation of $\Phi_t^\epsilon(x)=(\phi_t^{(1),\epsilon}(x),\phi_t^{(1),\epsilon}(x))$ clockwise through $45$ degrees.
One can easily see that $\Psi_t^\epsilon$ solves SDEs
\begin{equation}\label{SDE2c}
\begin{cases}
	d\psi_t^{(1),\epsilon}(x)
	=\Delta\psi_t^{(1),\epsilon}dt+\sqrt{2}d\hat{w}_t^{(1)}
	+d\hat\ell_t^{(1),\epsilon}(x),& x\in\Lambda, \\
	d\psi_t^{(2),\epsilon}(x)
	=\Delta\psi_t^{(2),\epsilon}dt+\sqrt{2}d\hat{w}_t^{(2)}
	+d\hat\ell_t^{(2),\epsilon}(x)-d\hat\ell_t^{(1),\epsilon}(x), &x\in\Lambda, \\
	\psi_t^{(i),\epsilon}(x)\equiv\psi_0^{(i),\epsilon}(x)
	, &x\in\integer^d\smallsetminus\Lambda, i=1,2, \\
\end{cases}
\end{equation}
where $\hat\ell_t^{(1),\epsilon}(x)$
and $\hat\ell_t^{(2),\epsilon}(x)$
are defined by
\begin{align}
	\hat\ell_t^{(1),\epsilon}(x)&=
	\frac{1}{2}\epsilon_1^{-1}\int_0^t
	\left(\psi_s^{(1),\epsilon}(x)-\psi_s^{(2),\epsilon}(x)\right)^{-}\,ds, \label{eq4.2} \\
	\hat\ell_t^{(2),\epsilon}(x)&=
	2\epsilon_2^{-1}\int_0^t
	\left(\psi_s^{(2),\epsilon}(x)\right)^{-}\,ds. \label{eq4.3}
\end{align}
Here, $\hat{w}_t(x)=(\hat{w}_t^{(1)}(x),\hat{w}_t^{(2)}(x))$ is
also the rotation of $w_t(x)=(w_t^{(1)}(x),\phi_t^{(2)}(x))$ 
clockwise through $45$ degrees, and thus a two-dimensional standard
Brownian motion again. 
We can show the monotonicity for $\Psi_t^\epsilon$ in $\epsilon_2$
quite parallel to Lemma~\ref{lem5.2a}. 
\begin{lemma}\label{lem5.4a}
	Fix $\epsilon=(\epsilon_1,\epsilon_2),
	\epsilon'=(\epsilon'_1,\epsilon'_2)$ such that
	$\epsilon_1=\epsilon'_1$ and $\epsilon_2\ge\epsilon'_2$.
	Let $\Psi_t^{\Lambda,\epsilon}$ and $\Psi_t^{\Lambda,\epsilon'}$
	be solutions of \eqref{SDE2c} with common Brownian motions and
	initial datum.
	We then have $\Psi_t^{\Lambda,\epsilon}\le\Psi_t^{\Lambda,\epsilon'}$
	for every $t>0$.
\end{lemma}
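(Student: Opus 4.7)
The plan is to mimic the proof of Lemma~\ref{lem5.2a}, adapted to the rotated dynamics \eqref{SDE2c}. Define $\eta_t^{(i)}(x) := \psi_t^{(i),\Lambda,\epsilon}(x) - \psi_t^{(i),\Lambda,\epsilon'}(x)$ for $i=1,2$ and $x\in\Lambda$; the target is to show $\eta_t^{(i)}\le 0$ pointwise. Since the two solutions share their driving Brownian motions and initial data, $\eta_t^{(i)}$ evolves deterministically starting from $0$, so the proof is an entirely pathwise calculation.

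First I would differentiate $G(t) := \sum_{i=1,2}\sum_{x\in\Lambda}\bigl(\eta_t^{(i)}(x)^{+}\bigr)^2$ in $t$, expand via \eqref{SDE2c}, and note that $\hat\ell^{(1),\epsilon}$ enters $\psi^{(1),\epsilon}$ with a $+$ sign and $\psi^{(2),\epsilon}$ with a $-$ sign. The result splits into three integrated contributions $F_1, F_2, F_3$. The Laplacian term $F_1$ is nonpositive by \eqref{summation-by-parts} combined with the pointwise inequality $(u-v)(u^{+}-v^{+})\ge(u^{+}-v^{+})^2$; the boundary contribution vanishes because $\eta_t^{(i)}\equiv 0$ outside $\Lambda$.

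The cross term $F_2$, coming from $d\hat\ell^{(1),\epsilon}-d\hat\ell^{(1),\epsilon'}$, carries weight $\eta^{(1),+}-\eta^{(2),+}$ because of the sign structure just noted. Using $\epsilon_1=\epsilon_1'$, the integrand reduces to a positive multiple of $\bigl\{(a-c)^{+}-(b-d)^{+}\bigr\}\bigl\{(a-b)^{-}-(c-d)^{-}\bigr\}$ with $(a,b,c,d)=(\psi^{(1),\epsilon},\psi^{(2),\epsilon},\psi^{(1),\epsilon'},\psi^{(2),\epsilon'})$. Setting $P=a-c,\,Q=b-d$ so that $(a-b)-(c-d)=P-Q$, one checks by comparing the sign of $P-Q$ that $P^+-Q^+$ and $(a-b)^--(c-d)^-$ always have opposite sign (or one vanishes), since $x\mapsto x^+$ is nondecreasing and $x\mapsto x^-$ is nonincreasing. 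Thus $F_2\le 0$. For $F_3$, whose integrand reads $\eta^{(2),+}\bigl(\epsilon_2^{-1}(\psi^{(2),\epsilon})^{-}-{\epsilon_2'}^{-1}(\psi^{(2),\epsilon'})^{-}\bigr)$, the hypothesis $\epsilon_2\ge\epsilon_2'$ gives $\epsilon_2^{-1}\le{\epsilon_2'}^{-1}$, while on $\{\eta^{(2),+}>0\}=\{\psi^{(2),\epsilon}>\psi^{(2),\epsilon'}\}$ the monotonicity of $x\mapsto -x^{-}$ gives $(\psi^{(2),\epsilon})^{-}\le(\psi^{(2),\epsilon'})^{-}$; multiplying the two inequalities on the support of $\eta^{(2),+}$ yields $F_3\le 0$. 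Combining, $G(t)\le 0$, hence $G(t)\equiv 0$, which is the required monotonicity.

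The main obstacle is the cross term $F_2$: because the rotation couples both coordinates through a single local time $\hat\ell^{(1),\epsilon}$, one cannot analyze $\eta^{(1)}$ and $\eta^{(2)}$ independently, and the correct four-variable sign identity mixing the $^{+}$ and $^{-}$ parts must be isolated and verified by a short pointwise case analysis rather than read off directly from the inequalities already used in Lemma~\ref{lem5.2a}.
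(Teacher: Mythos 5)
Your proof is correct and is precisely the argument the paper intends: the paper gives no explicit proof of this lemma, stating only that it is ``quite parallel'' to Lemma~\ref{lem5.2a}, and your computation --- differentiating the sum of squared positive parts, killing the Laplacian term via \eqref{summation-by-parts}, handling the shared local time $\hat\ell^{(1),\epsilon}$ through the four-variable sign identity, and using $\epsilon_2\ge\epsilon_2'$ together with the monotonicity of $u\mapsto u^{-}$ for the remaining term --- is exactly that parallel argument carried out in full. If anything, your sign bookkeeping (working with $(\eta^{+})^2$ for $\eta=\Psi^\epsilon-\Psi^{\epsilon'}$ to conclude $\eta\le0$) is cleaner than the displayed proof of Lemma~\ref{lem5.2a}, which writes $(\psi^{-})^2$ where the stated conclusion calls for the positive part.
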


Here, Lemmas~\ref{lem5.2a} and \ref{lem5.4a} imply the existence
of the limit in the right hand side of \eqref{eq5.1b}
for each $t\ge0$ and $x\in\Lambda$. Once we have
the boundedness and the equicontinuity of solutions $\Phi_t^\epsilon$
uniformly in $\epsilon$, we can conclude \eqref{eq5.1b}
by the similar way to the proof of Theorem~2.1 of \cite{DN03}.
We also obtain \eqref{eq2.3} as a simple application of 
the monotone convergence theorem.
As the final step for the proof of Theorem~\ref{existence-finite},
let us establish bounds corresponding to them.
\begin{lemma}\label{lem5.3a}
	Let $\Phi_t^\epsilon$ be the solution of \eqref{eq2.1}
	with initial data $\Phi_0^\epsilon$. Then, for every $T>0$,
	there exists a constant $C>0$ independent of $\epsilon_1,\epsilon_2>0$ such that
	\begin{gather}
		E\left[\sup_{0\le t\le T}\|\Phi_t^\epsilon\|^4\right]
		\le C(1+\|\Phi_0^\epsilon\|^4), \label{eq5.5a}\\
		E\left[\sup_{t_1\le t\le t_2}
		\|\Phi_t^\epsilon-\Phi_{t_1}^\epsilon\|^4\right]
		\le C(1+\|\Phi_0^\epsilon\|^4)\left((t_2-t_1)^2+(t_2-t_1)^3\right), \label{eq5.6a}
	\end{gather}
	holds for $0\le t_1\le t_2\le T$, where $\|\Phi\|$ denotes
	the ordinary Euclidean norm, that is,
	\[\|\Phi\|^2=\sum_{i=1,2}\sum_{x\in\Lambda}
	|\phi^{(i)}(x)|^2
	\]
	for $\Phi=(\phi^{(1)},\phi^{(2)})\in\real^\Lambda\times\real^\Lambda$.
\end{lemma}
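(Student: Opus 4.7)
My plan is to apply It\^o's formula to $\|\Phi_t^\epsilon\|^{2k}$ for $k=1,2$, exploiting a sign cancellation that eliminates the singular $\epsilon_i^{-1}$ factors appearing in the reflection terms. Summation by parts \eqref{summation-by-parts} gives $\sum_{i,x}2\phi^{(i),\epsilon}(x)\Delta\phi^{(i),\epsilon}(x) \le 0$, and regrouping the reflection contributions as
\[
2\sum_x\phi^{(1),\epsilon}(d\ell^{(1),\epsilon} - d\ell^{(2),\epsilon}) + 2\sum_x\phi^{(2),\epsilon}\,d\ell^{(2),\epsilon} = 2\sum_x\phi^{(1),\epsilon}\,d\ell^{(1),\epsilon} + 2\sum_x(\phi^{(2),\epsilon} - \phi^{(1),\epsilon})\,d\ell^{(2),\epsilon}
\]
and inserting the explicit forms of the local times (with the elementary identity $u\cdot u^- = -(u^-)^2$) produces
\[
-2\epsilon_1^{-1}\sum_x\bigl(\phi^{(1),\epsilon}(x)^-\bigr)^2 dt \; - \; 2\epsilon_2^{-1}\sum_x\bigl((\phi^{(2),\epsilon}(x) - \phi^{(1),\epsilon}(x))^-\bigr)^2 dt \le 0.
\]
Therefore $d\|\Phi_t^\epsilon\|^2 \le dM_t + 4|\Lambda|\,dt$ with $M_t$ a martingale whose bracket satisfies $d\langle M\rangle_t = 8\|\Phi_t^\epsilon\|^2\,dt$, uniformly in $\epsilon$.

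Applying It\^o to $(\|\Phi_t^\epsilon\|^2)^2$ gives $d\|\Phi_t^\epsilon\|^4 \le 2\|\Phi_t^\epsilon\|^2\,dM_t + C\|\Phi_t^\epsilon\|^2\,dt$. Integrating over $[0,T]$, taking the supremum and expectation, applying the Burkholder--Davis--Gundy inequality to $\int 2\|\Phi^\epsilon\|^2\,dM$ (whose bracket is $32\int_0^T\|\Phi_s^\epsilon\|^6\,ds$), and using Young's inequality to absorb the resulting $E[\sup_{t\le T}\|\Phi_t^\epsilon\|^4]^{3/4}$ term into the left-hand side yields \eqref{eq5.5a}.

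For \eqref{eq5.6a}, write $\phi^{(i),\epsilon}_t - \phi^{(i),\epsilon}_{t_1}$ via the SDE as the sum of a Laplacian integral, a Brownian increment, and local time increments. The Brownian part contributes $C(t_2-t_1)^2$ in fourth moment by BDG; the Laplacian integral, using the pointwise bound $|\Delta\phi^{(i),\epsilon}(x)| \le C\|\Phi_t^\epsilon\|$, H\"older, and \eqref{eq5.5a}, contributes $C(1+\|\Phi_0^\epsilon\|^4)(t_2-t_1)^3$. For the local time part, the SDE itself yields
\[
\ell^{(2),\epsilon}_t(x) - \ell^{(2),\epsilon}_{t_1}(x) = (\phi^{(2),\epsilon}_t(x) - \phi^{(2),\epsilon}_{t_1}(x)) - \int_{t_1}^t \Delta\phi^{(2),\epsilon}_s(x)\,ds - \sqrt{2}(w^{(2)}_t(x) - w^{(2)}_{t_1}(x)),
\]
and an analogous identity for $\ell^{(1),\epsilon}$, so the local time bound reduces to one on the configuration increment. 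To close the self-reference, I apply It\^o to $\|\Phi_t^\epsilon - \Phi_{t_1}^\epsilon\|^2$, splitting the Laplacian drift as $\sum\psi^{(i)}\Delta\phi^{(i)}_t = -\sum_b(\nabla\psi^{(i)}(b))^2 + \sum\psi^{(i)}\Delta\phi^{(i)}_{t_1}$, controlling the cross-term by Young's inequality against the (time-$t_1$ measurable) quantity $\|\Delta\Phi_{t_1}^\epsilon\|^2$, and running Gr\"onwall on the non-martingale part; a second round of It\^o on the fourth power with BDG+Young then delivers the stated $(t_2-t_1)^2 + (t_2-t_1)^3$ rate.

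The principal obstacle is keeping all estimates uniform in $\epsilon = (\epsilon_1, \epsilon_2)$ despite the singular $\epsilon_i^{-1}$ factors, which is exactly what the first-paragraph sign cancellation resolves. A secondary technical point for \eqref{eq5.6a} is that the incremental product $\psi^{(i)}_t\,d\ell^{(i),\epsilon}_t$ lacks an \emph{a priori} sign, so the local time increment cannot be handled by the same direct cancellation used for \eqref{eq5.5a}; instead, one reduces it to the configuration increment via the SDE identity above and closes the self-referential estimate by Gr\"onwall.
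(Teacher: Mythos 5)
Your proposal is correct and follows essentially the same route as the paper: an It\^o/energy estimate on the squared norm in which the singular penalization terms are eliminated by the sign identity $u\cdot u^{-}=-(u^{-})^{2}$ uniformly in $\epsilon$, followed by Burkholder--Davis--Gundy, Young and Gronwall for the fourth moments. The paper streamlines both bounds by proving one inequality for $\|\Phi_t^\epsilon-\Psi\|^2$ with an arbitrary nonnegative reference configuration $\Psi$ (then taking $\Psi=0$ for \eqref{eq5.5a} and $\Psi=\Phi_{t_1}^\epsilon$ for \eqref{eq5.6a}), which makes the troublesome terms $-\sum_x\psi^{(i)}(x)\,d\ell^{(i),\epsilon}(x)$ nonpositive at the outset and renders your detour through the SDE identity for the local-time increments unnecessary.
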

\begin{proof}
For a fixed $\Psi=(\psi^{(1)},\psi^{(2)})\in\real^\Lambda\times\real^\Lambda$, we have
\begin{align*}
	\|\Phi_t^\epsilon-\Psi\|^2 
	&=	\|\Phi_0^\epsilon-\Psi\|^2 \\
	&\qquad {}+2\sum_{i=1,2}\sum_{x\in\Lambda}\int_0^t\left(\phi_s^{(i),\epsilon}(x)
	-\psi^{(i)}(x)\right)\Delta\phi_s^{(i),\epsilon}(x)\,ds \\
	&\qquad {}+2\sum_{x\in\Lambda}
	\int_0^t\left(\phi_s^{(1),\epsilon}(x)
	-\psi^{(1)}(x)\right)d\ell_s^{(1),\epsilon}(x) \\
	&\qquad {}+2\sum_{x\in\Lambda}
	\int_0^t\left(\phi_s^{(2),\epsilon}(x)-\phi_s^{(1),\epsilon}(x)
	-\psi^{(2)}(x)+\psi^{(1)}(x)\right)
	d\ell_s^{(2),\epsilon}(x) \\
	&\qquad {}+2t|\Lambda|+m_{t}(\Psi),
\end{align*}
where $m_{t}(\Psi)$ is the martingale with the following expression:
\[m_t(\Psi)=2\sum_{i=1,2}\sum_{x\in\Lambda}\int_0^t\left(\phi_s^{(i),\epsilon}(x)-\psi^{(i)}(x)\right)dw_s^{(i)}(x).\]
Here, since the third term in the right hand side is expressed by
\[\sum_{x\in\Lambda}\int_0^t\phi_s^{(1),\epsilon}(x)
d\ell_s^{(1),\epsilon}(x)
-\sum_{x\in\Lambda}\psi^{(1)}(x)
\ell_t^{(1),\epsilon}(x),\]
it is nonpositive by the definition of $\ell_s^{(1),\epsilon}(x)$. Similarly, we also obtain that the fourth term
is nonpositive. Using the Schwarz inequality, we get
\begin{equation}
	\|\Phi_t^\epsilon-\Psi\|^2
	\le \|\Phi_0^\epsilon-\Psi\|^2
	+K_1\int_0^t\left(1+\|\Phi_s^\epsilon\|^2+\|\Phi_s^\epsilon-\Psi\|^2
	\right)\,ds+m_t(\Psi)
\end{equation}
with some constant $K_1>0$ independent of $\epsilon$ and $t>0$.
Repeating the argument in the proof of Lemma~2.2 of \cite{DN03},
we obtain \eqref{eq5.5a} and \eqref{eq5.6a}.
\end{proof}

\subsection{The dynamics on the infinite lattice $\integer^d$}
\label{sec-existence-infinite}
We shall construct the infinite system $\Phi_t$ governed by SDEs \eqref{SDE1}.
Our goal is the following:
\begin{theorem}\label{existence-infinite}
	For every $\Phi_0=(\phi_0^{(1)},\phi_0^{(2)})\in\mathscr{X}_{r,+}^2$,
	there exists an unique (strong) solution $\Phi_t$ of \eqref{SDE1}.
	Furthermore, the identity \eqref{eq2.3d} holds.
\end{theorem}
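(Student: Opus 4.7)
The plan is to construct $\Phi_t$ as the monotone limit, as $N\to\infty$, of the finite-volume solutions $\Phi^{\Lambda_N}_t$ supplied by Theorem~\ref{existence-finite}, driven by the common Brownian motions $\{w_t^{(i)}(x)\}$ and started from the initial data equal to $\Phi_0$ on $\Lambda_N$ and extended by $0$ outside. The first step is to show that this approximating sequence is monotone in $N$: for $\Lambda_1\subset\Lambda_2$ one has $\Phi_0^{\Lambda_1}\le\Phi_0^{\Lambda_2}$ by construction, and the summation-by-parts computation of Proposition~\ref{comparison3}, applied to $\psi^{(i)}:=\phi^{(i),\Lambda_1}-\phi^{(i),\Lambda_2}$ (so that each of $F_1,F_2,F_3$ is nonpositive for the same reasons as there), yields $\Phi^{\Lambda_1}_t\le\Phi^{\Lambda_2}_t$ for every $t\ge0$. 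Consequently the pointwise limit $\Phi_t(x):=\lim_{N\to\infty}\Phi^{\Lambda_N}_t(x)$ exists in $[0,\infty]^2$ almost surely.

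Next I would upgrade this to a convergence in $\mathscr{X}^2_r$. Performing the energy computation of Lemma~\ref{lem5.3a} on the penalized system $\Phi^{\Lambda_N,\epsilon}_t$ but inserting the weight $e^{-2r|x|}$ in every sum over $x$, and using that $\sum_x \phi(x)\Delta\phi(x)e^{-2r|x|}\le C_r\sum_x \phi(x)^2 e^{-2r|x|}$ (a standard consequence of summation-by-parts and $|\nabla e^{-r|x|}|\le r\,e^{-r|x|}$) together with the non-positivity of the Skorokhod terms and the Burkholder--Davis--Gundy inequality, I expect a bound of the form
\[
E\Bigl[\sup_{0\le s\le t}\|\Phi^{\Lambda_N}_s\|_r^2\Bigr]\le C(t,r)\bigl(1+\|\Phi_0\|_r^2\bigr)
\]
after letting $\epsilon\downarrow0$ via Theorem~\ref{existence-finite}; the bound is uniform in $N$. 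Combined with the pointwise monotone convergence above, this gives $\Phi_t\in\mathscr{X}^2_{r,+}$ almost surely and yields the convergence \eqref{eq2.3d} by monotone convergence.

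The final step is to verify that $\Phi_t$ solves \eqref{SDE1} and is unique. Fix $x\in\integer^d$ and take $N$ so large that $x\in\Lambda_N$. From the equation for $\phi^{(2),\Lambda_N}_t(x)$ one can isolate $\ell^{(2),\Lambda_N}_t(x)$, and every other term converges in $L^2$ by the uniform weighted bound and bounded convergence (the Laplacian involves only the $2d+1$ nearest-neighbour values, all covered by the pointwise limit); this identifies $\ell^{(2)}_t(x):=\lim_N\ell^{(2),\Lambda_N}_t(x)$ as an increasing process, after which $\ell^{(1)}_t(x)$ is recovered from the equation for $\phi^{(1),\Lambda_N}_t(x)$. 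The two Skorokhod complementarity conditions pass to the limit using the pointwise monotonicity in $N$ and Fatou, and uniqueness then follows by a weighted analogue of the energy argument underlying Proposition~\ref{comparison3}: the difference of any two solutions with common noise satisfies a Gronwall inequality in $\mathscr{X}^2_r$. The main obstacle I anticipate here is the separate identification of the two local-time limits --- since only their difference appears naturally in the equation for $\phi^{(1)}$ --- together with the simultaneous verification of both Skorokhod complementarity conditions for the limiting processes.
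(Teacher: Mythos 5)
Your proposal follows essentially the same route as the paper: the paper proves Theorem~\ref{existence-infinite} by combining a monotonicity-in-$\Lambda$ comparison lemma (Lemma~\ref{comparison5}, proved exactly by the $(\psi^{(i)})^{+}$ energy argument of Proposition~\ref{comparison3}) with uniform weighted moment bounds in $\mathscr{X}^2_r$ (Lemma~\ref{lem5.3}), and then passes to the increasing limit as in Theorem~2.1 of \cite{DN03}, with \eqref{eq2.3d} following by monotone convergence. The only substantive item you underplay is the uniform-in-$\Lambda$ time-equicontinuity estimate \eqref{eq5.6} (a fourth-moment, Kolmogorov-type bound), which the paper records precisely to control the limiting paths and local times in the final step you flag as the main obstacle.
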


Let us introduce two lemmas, which imply Theorem~\ref{existence-infinite}
by applying the same argument as in the proof of Theorem~2.1 of \cite{DN03}.
We note that $\Phi_t$ can be obtained as the increasing limit
of $\Phi_t^\Lambda$, and therefore the monotone convergence theorem yields
the identity \eqref{eq2.3d}.
\begin{lemma}\label{comparison5}
Let $\Phi_t^{\Lambda}=(\phi^{(1)}_t,\phi^{(2)}_t)$ and
$\Phi_t^{\Lambda'}=(\phi^{(1),\Lambda'}_t,\phi^{(2),\Lambda'}_t)$
be solutions of \eqref{SDE2} with common Brownian motions
$\{w_t^{(i)}(x);\,x\in\integer^d,i=1,2\}$ and initial datum satisfying
$\supp \Phi_0\subset \Lambda$. If $\Lambda\subset\Lambda'$, then
$\Phi_t^\Lambda\le\Phi_t^{\Lambda'}$ holds.
\end{lemma}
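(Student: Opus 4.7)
The plan is to mimic the argument of Proposition~\ref{comparison3} almost verbatim, so I would set $\psi_t^{(i)}(x) = \phi_t^{(i),\Lambda}(x) - \phi_t^{(i),\Lambda'}(x)$ for all $x\in\integer^d$ and $i=1,2$ and compute the time derivative of $\sum_{i=1,2}\sum_{x\in\Lambda}(\psi_t^{(i)}(x)^+)^2$. The preparatory step is to show that $\psi_t^{(i)}(x)^+=0$ for every $x\in\integer^d\smallsetminus\Lambda$. Indeed, outside $\Lambda'$ both processes are frozen at their initial data, which vanishes by the support hypothesis $\supp\Phi_0\subset\Lambda$; and on $\Lambda'\smallsetminus\Lambda$ the process $\Phi_t^\Lambda$ is frozen at $0$ while $\Phi_t^{\Lambda'}\ge0$ by Skorokhod reflection, so $\psi_t^{(i)}\le0$ there. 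Consequently $\psi_t^{(i)+}$ has support inside $\Lambda$, which is exactly the hypothesis required by the summation-by-parts formula \eqref{summation-by-parts}.

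Next I would differentiate $\sum_{i,x\in\Lambda}(\psi_t^{(i)}(x)^+)^2$ in $t$ to obtain $F_1+F_2+F_3$ in complete analogy with the proof of Proposition~\ref{comparison3}: a discrete-Laplacian term $F_1$, a term $F_2$ involving $d\ell_t^{(1),\Lambda}(x)-d\ell_t^{(1),\Lambda'}(x)$, and a term $F_3$ involving $d\ell_t^{(2),\Lambda}(x)-d\ell_t^{(2),\Lambda'}(x)$. For $F_1$, I apply \eqref{summation-by-parts} (legitimate thanks to the preparatory step) together with the pointwise bound $(u^+-v^+)(u-v)\ge(u^+-v^+)^2$ to conclude $F_1\le 0$. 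For $F_2$, the measure $d\ell_t^{(1),\Lambda}(x)$ is supported where $\phi_t^{(1),\Lambda}(x)=0$, on which $\psi_t^{(1)}(x)=-\phi_t^{(1),\Lambda'}(x)\le 0$ and hence $\psi_t^{(1)+}(x)=0$; the remaining contribution $-2\int\psi_t^{(1)+}\,d\ell_t^{(1),\Lambda'}$ is manifestly nonpositive. For $F_3$, I reuse the algebraic identity
\[
\bigl\{(a-b)^+-(c-d)^+\bigr\}\bigl\{(a-c)^+-(b-d)^+\bigr\}\ge0,
\]
already invoked in Proposition~\ref{comparison3}, with $a=\phi^{(2),\Lambda}$, $b=\phi^{(1),\Lambda}$, $c=\phi^{(2),\Lambda'}$, $d=\phi^{(1),\Lambda'}$, to see that the sign of $\psi^{(2)+}-\psi^{(1)+}$ coincides with that of $(\phi^{(2),\Lambda}-\phi^{(1),\Lambda})^+-(\phi^{(2),\Lambda'}-\phi^{(1),\Lambda'})^+$, which opposes the sign of $d\ell_t^{(2),\Lambda}-d\ell_t^{(2),\Lambda'}$ when one uses the fact that each local time is supported where its associated gap vanishes.

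Putting this together yields $\sum_{i,x\in\Lambda}(\psi_t^{(i)}(x)^+)^2\le 0$, hence $\phi_t^{(i),\Lambda}(x)\le\phi_t^{(i),\Lambda'}(x)$ on $\Lambda$, and the inequality outside $\Lambda$ was established in the preparatory step. I expect the only real subtlety to be justifying the formal It\^o/chain rule computation on $(\cdot)^+$ in the presence of the Skorokhod local times; if one wishes to be fully rigorous, one should carry out the computation for the smooth approximations $\Phi_t^{\Lambda,\epsilon,\delta}$ and $\Phi_t^{\Lambda',\epsilon,\delta}$ of \eqref{eq2.2} (where the reflection is replaced by the Lipschitz penalty $\chi'_{\epsilon_i,\delta}$), derive the analogous pointwise comparison from the monotonicity of $\chi'_{\epsilon_i,\delta}$ exactly as in Lemma~\ref{semigr-lip} and Lemma~\ref{lem5.2a}, and then pass to the limit using \eqref{eq2.4} and Theorem~\ref{existence-finite}. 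This technical approximation step is the main obstacle, but it is structurally identical to what has already been done elsewhere in Section~\ref{sec-approx}, so no new ideas are required.
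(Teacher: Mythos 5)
Your proposal is correct and is exactly the route the paper intends: the paper omits the proof of Lemma~\ref{comparison5}, stating only that it follows ``by a similar manner to the proof of Proposition~\ref{comparison3}'', and your argument is precisely that adaptation, with the right preparatory observation that $(\psi^{(i)})^{+}$ vanishes off $\Lambda$ (so that \eqref{summation-by-parts} applies) and the correct sign analysis of the local-time terms via the supports of $d\ell^{(1)}$, $d\ell^{(2)}$ and the elementary inequality for positive parts.
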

\begin{lemma}\label{lem5.3}
	Let $\Phi_t^\Lambda$ be the solution of \eqref{SDE2}
	with initial data $\Phi_0^\Lambda$. Then, for every $T>0$,
	there exists a constant $C>0$ independent of $\Lambda$ such that
	\begin{gather}
		E\left[\sup_{0\le t\le T}\|\Phi_t^\Lambda\|_r^4\right]
		\le C(1+\|\Phi_0^\Lambda\|_r^4), \label{eq5.5}\\
		E\left[\sup_{t_1\le t\le t_2}
		\|\Phi_t^\Lambda-\Phi_{t_1}^\Lambda\|_r^4\right]
		\le C(1+\|\Phi_0^\Lambda\|_r^4)(t_2-t_1)^2, \label{eq5.6}
	\end{gather}
	holds for $0\le t_1\le t_2\le T$.
	The estimates \eqref{eq5.5} and \eqref{eq5.6}
	is also true for the solution $\Phi_t$ of \eqref{SDE1}
	if there exists.
\end{lemma}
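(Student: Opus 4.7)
The plan is to mimic the proof of Lemma~\ref{lem5.3a} with the weighted norm $\|\cdot\|_r$, exploiting two features specific to the Skorokhod system \eqref{SDE2}: the local time contributions cancel exactly rather than merely being nonpositive, and the exponential weight introduces only a bounded perturbation to the summation-by-parts formula. First I would apply It\^o's formula to $\|\Phi_t^\Lambda\|_r^2$ using \eqref{SDE2}. The Skorokhod conditions in \eqref{SDE2} give $\int_0^t \phi_s^{(1),\Lambda}\,d\ell_s^{(1),\Lambda}=0$ and $\int_0^t(\phi_s^{(2),\Lambda}-\phi_s^{(1),\Lambda})\,d\ell_s^{(2),\Lambda}=0$, so that after reorganizing, the sum of local-time contributions vanishes pointwise in $x$, regardless of the weight $e^{-2r|x|}$. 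The It\^o correction from the noise contributes $4\sum_{x\in\Lambda}e^{-2r|x|}\le C_r:=4\sum_{x\in\integer^d}e^{-2r|x|}<\infty$ uniformly in $\Lambda$, plus a martingale $M_t$ with $\langle M\rangle_t\le C\int_0^t\|\Phi_s^\Lambda\|_r^2\,ds$ since $e^{-4r|x|}\le e^{-2r|x|}$.

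The Laplacian contribution $2\sum_{i,x}\phi_s^{(i),\Lambda}(x)\Delta\phi_s^{(i),\Lambda}(x)e^{-2r|x|}$ is handled by a weighted version of \eqref{summation-by-parts}: the gradient of $e^{-2r|\cdot|}$ along any unit bond is bounded by $Cr\,e^{-2r|x_b|}$, and Young's inequality produces a bound of the form $Cr\|\Phi_s^\Lambda\|_r^2$ plus a nonpositive weighted gradient term that may be discarded. Combining these pieces yields
\begin{equation*}
\|\Phi_t^\Lambda\|_r^2\le \|\Phi_0^\Lambda\|_r^2+K\int_0^t\bigl(1+\|\Phi_s^\Lambda\|_r^2\bigr)\,ds+M_t,
\end{equation*}
with $K=K(r)$ independent of $\Lambda$. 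Squaring, taking $\sup_{0\le t\le T}$ and expectation, Cauchy-Schwarz on the time integral and Burkholder--Davis--Gundy on $M_t$ give a self-improving Gronwall-type inequality for $E[\sup_{s\le t}\|\Phi_s^\Lambda\|_r^4]$, which closes to produce \eqref{eq5.5}.

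For the increment estimate \eqref{eq5.6}, I would repeat the It\^o expansion but for $\|\Phi_t^\Lambda-\Phi_{t_1}^\Lambda\|_r^2$ on $[t_1,t]$. The local-time contributions are again favorable: when $d\ell_s^{(1),\Lambda}(x)$ increases we have $\phi_s^{(1),\Lambda}(x)=0$, so the coefficient $-2\phi_{t_1}^{(1),\Lambda}(x)$ is $\le0$; when $d\ell_s^{(2),\Lambda}(x)$ increases we have $\phi_s^{(2),\Lambda}=\phi_s^{(1),\Lambda}$, so the combined coefficient reduces to $-2(\phi_{t_1}^{(2),\Lambda}-\phi_{t_1}^{(1),\Lambda})\le0$. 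The drift splits as $\Delta(\Phi_s-\Phi_{t_1})+\Delta\Phi_{t_1}$; the first piece is absorbed as before, while the second is estimated by Cauchy--Schwarz using $\|\Delta\Phi_{t_1}\|_r\le C\|\Phi_{t_1}\|_r$. One obtains
\begin{equation*}
\|\Phi_t^\Lambda-\Phi_{t_1}^\Lambda\|_r^2\le K\int_{t_1}^t\bigl(1+\|\Phi_{t_1}^\Lambda\|_r^2+\|\Phi_s^\Lambda-\Phi_{t_1}^\Lambda\|_r^2\bigr)\,ds+M_t',
\end{equation*}
and another round of squaring, BDG, and Gronwall, combined with \eqref{eq5.5} applied at $t_1$ to bound $E[\|\Phi_{t_1}^\Lambda\|_r^4]$, delivers \eqref{eq5.6}. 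The same argument literally applies to the solution $\Phi_t$ of \eqref{SDE1} once it has been constructed, since neither the cancellations nor the weighted estimate use the finiteness of $\Lambda$. The main technical obstacle is Step~2: showing that the weight's nonzero gradient in the summation-by-parts does not spoil uniformity in $\Lambda$, which reduces to the elementary observation that the bad term is of order $r$ and can be absorbed into the good gradient term via Young's inequality with a constant depending only on $r$ and the dimension.
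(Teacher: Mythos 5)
Your proposal is correct and follows essentially the same route as the paper: Itô's formula for the weighted norm $\|\cdot\|_r$, nonpositivity of the local-time contributions via the Skorokhod conditions (the paper phrases this by expanding $\|\Phi_t^\Lambda-\Psi\|_r^2$ for a fixed $\Psi\in\mathscr{X}_{r,+}^2$, with $\Psi=0$ and $\Psi=\Phi_{t_1}^\Lambda$ covering your two cases), control of the drift by the Lipschitz continuity of the discrete Laplacian on $\mathscr{X}_r^2$ (equivalent to your weighted summation-by-parts plus Young), and then the Gronwall--BDG iteration of Lemma~2.2 of \cite{DN03}. No gaps.
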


Let us give a proof of Lemma~\ref{lem5.3} only, since we can show
Lemma~\ref{comparison5} by a similar manner to the proof
of Proposition~\ref{comparison3}.
\begin{proof}[Proof of Lemma~\ref{lem5.3}]
For fixed $\Psi=(\psi^{(1)},\psi^{(2)})\in\mathscr{X}_{r,+}^2$, we have
\begin{align*}
	\|\Phi_t^\Lambda&-\Psi\|_{r}^2 \\
	&=	\|\Phi_0^\Lambda-\Psi\|_{r}^2 \\
	&\qquad {}+2\sum_{i=1,2}\sum_{x\in\Lambda}\exp(-2r|x|)\int_0^t\left(\phi_s^{(i),\Lambda}(x)
	-\psi^{(i)}(x)\right)\Delta\phi_s^{(i),\Lambda}(x)\,ds \\
	&\qquad {}+2\sum_{x\in\Lambda}\exp(-2r|x|)
	\int_0^t\left(\phi_s^{(1),\Lambda}(x)
	-\psi^{(1)}(x)\right)d\ell_s^{(1),\Lambda}(x) \\
	&\qquad {}+2\sum_{x\in\Lambda}\exp(-2r|x|)
	\int_0^t\left(\phi_s^{(2),\Lambda}(x)-\phi_s^{(1),\Lambda}(x)
	-\psi^{(2)}(x)+\psi^{(1)}(x)\right)
	d\ell_s^{(2),\Lambda}(x) \\
	&\qquad {}+2t\sum_{x\in\Lambda}\exp(-2r|x|)+m_{t}(\Psi),
\end{align*}
where $m_{t}(\Psi)$ is the martingale with the following expression:
\[m_t(\Psi)=2\sum_{i=1,2}\sum_{x\in\Lambda}\exp(-2r|x|)\int_0^t\left(\phi_s^{(i),\Lambda}(x)-\psi^{(i)}(x)\right)dw_s^{(i)}(x).\]
Here, since the third term in the right hand side is expressed by
\[\sum_{x\in\Lambda}\exp(-2r|x|)\int_0^t\phi_s^{(1),\Lambda}(x)
d\ell_s^{(1),\Lambda}(x)
-\sum_{x\in\Lambda}\exp(-2r|x|)\psi^{(1)}(x)
\ell_t^{(1),\Lambda}(x)\]
and $\ell_t^{(1),\Lambda}(x)$ is the local time of $\phi_t^{(1),\Lambda}(x)$,
this term is nonpositive. Similarly, we also obtain that the fourth term
is nonpositive. Using the Schwarz inequality and the Lipschitz continuity
of $\Phi=(\phi^{(1)},\phi^{(2)})\to(\Delta\phi^{(1)},\Delta\phi^{(2)})$ in $\mathscr{X}_r^2$, we get
\begin{equation}
	\|\Phi_t^\Lambda-\Psi\|_{r}^2
	\le \|\Phi_0^\Lambda-\Psi\|_{r}^2
	+K_1\int_0^t\left(1+\|\Psi_s^\Lambda\|_r^2+\|\Psi_s^\Lambda-\Psi\|_r^2
	\right)\,ds+m_t(\Psi)
\end{equation}
with some constant $K_1>0$ independent of $\Lambda$ and $t>0$.
Repeating the argument in the proof of Lemma~2.2 of \cite{DN03},
we obtain \eqref{eq5.5} and \eqref{eq5.6}.
We can also obtain the bound for $\Phi_t$ corresponding to 
\eqref{eq5.5} and \eqref{eq5.6} by the similar way to the above. 
\end{proof}

\section{Proof of the lower bound}\label{sec-lower}
We shall discuss the lower bound in Theorem~\ref{main-thm}.
Our goal in this section is the following:
\begin{theorem}\label{lower-bound}
	Under the condition of Theorem~\ref{main-thm}, we have
	\begin{gather}
		\liminf_{t\to\infty}E\left[\frac{\phi_t^{(i)}(0)}{\sqrt{\log_d(t)}}
		\right]\ge\sqrt{C_1/2}, \label{eq2.5b}\\
		\lim_{t\to\infty}P(\phi_t^{(i)}(0)\le \sqrt{(C_i-\epsilon)\log_d(t)})=0
		\label{eq2.6b}
	\end{gather}
	for every $i=1,2$ and $\epsilon>0$, where constants $C_1,C_2$ are same as in Theorem~\ref{main-thm}.
\end{theorem}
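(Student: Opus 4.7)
The strategy, following \cite{DN03}, is to reduce the lower bound to an estimate for the finite volume Gibbs measure $\mu_{\Lambda_N}$ via the comparison principle and the logarithmic Sobolev inequality, and then to combine the sample mean asymptotics of \cite{BG03} and \cite{Sa03} with a pointwise variance control (the forthcoming Proposition~\ref{asymp-var}) to obtain the statement about $\phi_t^{(i)}(0)$. I would choose $N=N(t)$ so that $\log_d N \sim \tfrac{1}{2}\log_d t$, morally $N\asymp t^{1/2}$. By Proposition~\ref{comparison3}, the finite volume solution $\Phi_t^N$ started from $\Phi_0$ restricted to $\Lambda_N$ and extended by zero outside satisfies $\Phi_t^N\le \Phi_t$ pathwise, so it suffices to prove both \eqref{eq2.5b} and \eqref{eq2.6b} with $\phi_t^{(i),N}(0)$ in place of $\phi_t^{(i)}(0)$.

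Next I would pass to the smooth approximations $\Phi_t^{N,\epsilon,\delta}$ of Section~\ref{sec-approx}, whose invariant measure $\mu_{\Lambda_N,\epsilon,\delta}$ should satisfy a logarithmic Sobolev inequality (the potential is the Gaussian quadratic part plus smooth convex penalizations $\chi_{\epsilon_i,\delta}$). Since Assumption (iii) ensures that the relative entropy of the law of $\Phi_0$ with respect to $\mu_{\Lambda_N,\epsilon,\delta}$ is finite, the log-Sobolev inequality yields exponential decay of relative entropy, hence $\mathrm{Law}(\Phi_t^{N,\epsilon,\delta})\to \mu_{\Lambda_N,\epsilon,\delta}$ as $t\to\infty$. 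Letting $\delta\downarrow 0$ and $\epsilon\downarrow 0$ through \eqref{eq2.4} and \eqref{eq2.3} identifies the large-$t$ first two moments of $\phi_t^{(i),N}(0)$ with those of $\phi^{(i)}(0)$ under $\mu_{\Lambda_N}$. Applying the sample-mean result of \cite{BG03} and \cite{Sa03} together with a translation argument on interior boxes $\Lambda_{\kappa N}$ then extracts $\mu_{\Lambda_N}[\phi^{(i)}(0)] \ge \sqrt{(C_i-o(1))\log_d N}$, which after choosing $N$ as above yields \eqref{eq2.5b}.

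For the convergence in probability \eqref{eq2.6b} I would combine the mean estimate with the variance bound from Proposition~\ref{asymp-var} and apply Chebyshev. The key tool for that variance bound is the Lipschitz semigroup estimate \eqref{2.11b} of Proposition~\ref{semigr-lip}: it dominates the dependence of $\phi_t^{(i),N,\epsilon,\delta}(0)$ on its initial data by the free heat kernel $p_t(0,\cdot)$, which in turn lets one bound $\mathrm{Var}(\phi_t^{(i),N,\epsilon,\delta}(0))$ by a random-walk-weighted sum of pointwise variances under the free Gaussian $\bar\mu_{\Lambda_N}$, which is $o(\log_d N)$ in both dimensions $d\ge 3$ and $d=2$. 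The principal obstacle is precisely Proposition~\ref{asymp-var}: the literature provides only sample-mean asymptotics for $\mu_{\Lambda_N}$, not pointwise bounds, so the main work is to establish $\mathrm{Var}_{\mu_{\Lambda_N}}(\phi^{(i)}(0)) = o(\log_d N)$, which is what separates a concentration statement from a bare expectation lower bound. Making the heat-kernel-dominated coupling of \eqref{2.11b} interact cleanly with the wall and the exclusion constraint, so that variance bounds can be transferred from $\bar\mu_{\Lambda_N}$ to the constrained $\mu_{\Lambda_N}$, is the delicate point.
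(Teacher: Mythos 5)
Your treatment of the expectation bound \eqref{eq2.5b} is essentially the paper's: comparison with the finite-volume dynamics on $\Lambda_N$, $N\asymp t^{1/2}$ (Proposition~\ref{comparison3}); convergence of the law of $\Phi_t^N$ to $\mu_{\Lambda_N}$ via the logarithmic Sobolev inequality for the smoothed dynamics together with the entropy bound of order $N^d$ (Proposition~\ref{LSI}); the sample-mean asymptotics of \cite{BG03} and \cite{Sa03}; and translation invariance of the infinite-volume dynamics plus Chebyshev to convert the sample-mean event into a bound on $E[\phi_t^{(i)}(0)]$.

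The gap is in how you propose to get from there to \eqref{eq2.6b}. First, the logarithmic Sobolev step yields only total-variation convergence of $\mu_t^N$ to $\mu_{\Lambda_N}$; this transfers probabilities of events such as \eqref{eq2.5}, but it does not ``identify the first two moments'' of the unbounded variable $\phi^{(i)}(0)$, so a static bound $\mathrm{Var}_{\mu_{\Lambda_N}}(\phi^{(i)}(0))=o(\log_d N)$ (which one could indeed get from Brascamp--Lieb against $\bar\mu_{\Lambda_N}$) cannot be transported to the law of $\phi_t^{(i),N}(0)$ by this route. Second, and relatedly, you have misidentified what Proposition~\ref{asymp-var} is: it is not a statement about the Gibbs measure at all, and no transfer of variances from $\bar\mu_{\Lambda_N}$ to the constrained $\mu_{\Lambda_N}$ occurs anywhere in the paper. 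It is a purely dynamic bound, $\mathrm{var}(\phi_t^{(i)}(0))\le K_1\int_0^{K_2 t}p_s(0,0)\,ds=o(\log_d t)$, proved by writing $M_s^{(i)}=(P_{t-s}F^{(i)})(\Phi_s^{\Lambda,\epsilon,\delta})$ with $F^{(i)}=\phi^{(i)}(0)$ as a martingale, so that $\mathrm{var}(\phi_t^{(i)}(0))=E[\langle M^{(i)}\rangle_t]$, and then bounding the integrand of the quadratic variation by the semigroup gradient estimate $\bigl|\partial u^{(i)}/\partial\phi^{(j)}(x)\bigr|\le p_{t-r}(0,x)$ — this is precisely what \eqref{2.11b} of Proposition~\ref{semigr-lip} provides — followed by Chapman--Kolmogorov to produce $4\int_0^t p_{2r}(0,0)\,dr$, and finally the approximations \eqref{eq2.4}, \eqref{eq2.3}, \eqref{eq2.3d} to remove $\delta$, $\epsilon$ and $\Lambda$. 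So you have named the right key lemma, but you would apply it to the initial data only; the essential move is to apply it to the semigroup $P_{t-s}$ at every intermediate time and feed the resulting Lipschitz bound into the It\^o isometry. With that correction the Chebyshev argument you outline does close, since $\{\phi_t^{(i)}(0)\le c\}\subset\{\phi_t^{(i),N}(0)\le c\}$ by the comparison theorem.
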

We at first prove the lower bound for the expectation \eqref{eq2.5b}
in Subsection~\ref{lower-exp}. As a next step, we establish an estimate for
the variance of $\phi_t^{(i)}(0)$ and conclude \eqref{eq2.6b}
in Subsection~\ref{estimate-var}.

\subsection{Estimate for the expected value}\label{lower-exp}
In this subsection, we establish the lower bound of the expected value
\eqref{eq2.5b}.
We at first prepare the following proposition, which 
plays the key role in the proof of \eqref{eq2.5b}.
It is based on the logarithmic Sobolev inequality for
$\mathscr{E}_{\epsilon,\delta}$.
\begin{prop}\label{LSI}
	Let $\mu_t^N$ be the law of $\phi_t^N$ with initial data
	$\phi_t^N\equiv\phi_t$ on $\Lambda_N$ and $\phi_t^N\equiv0$ on
	$\integer^d\smallsetminus\Lambda_N$. Then, there exist two constants
	$c_1,c_2>0$ such that
	\[d_{\mathrm{var}}(\mu_t^N,\mu_N)\le c_1N^d\exp(-c_2N^{-2}t).\]
	Here, $d_{\mathrm{var}}$ is the variational distance, that is,
	\[d_{\mathrm{var}}(\mu,\nu):=\sup_{A\in\mathscr{B}}|\mu(A)-\nu(A)|,\]
	where $\mathscr{B}$ is Borel $\sigma$-algebra in $(\real^{\Lambda_N})^2$.
\end{prop}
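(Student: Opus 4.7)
The plan is to establish a logarithmic Sobolev inequality, uniform in $N$ up to the factor $N^{2}$, for the smoothed measure $\mu_{\Lambda_N,\epsilon,\delta}$, combine it with the Bakry-Emery semigroup estimate for the reversible dynamics $\Phi_t^{\Lambda_N,\epsilon,\delta}$ of \eqref{eq2.2}, and finally pass to the limit $\delta\downarrow0$, $\epsilon\downarrow0$ using \eqref{eq2.4} and \eqref{eq2.3} together with Pinsker's inequality.

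First I would verify the LSI. The density of $\mu_{\Lambda_N,\epsilon,\delta}$ with respect to Lebesgue measure on $(\real^{\Lambda_N})^2$ is proportional to
\[
\exp\Bigl(-\tfrac12\sum_{b\in\overline{\Lambda_N^*}}\bigl|\nabla\phi^{(1)}(b)\bigr|^2
-\tfrac12\sum_{b\in\overline{\Lambda_N^*}}\bigl|\nabla\phi^{(2)}(b)\bigr|^2
-\sum_{x\in\Lambda_N}W_{\epsilon,\delta}(\phi^{(1)}(x),\phi^{(2)}(x))\Bigr),
\]
and since each $\chi_{\alpha,\delta}$ is convex, the Hessian of the full potential is bounded below by that of the quadratic Gaussian part, namely the Dirichlet Laplacian on $\Lambda_N$ applied independently to each component. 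The smallest eigenvalue of $-\Delta$ on $\Lambda_N$ with zero boundary values is of order $N^{-2}$, so the Bakry-Emery criterion yields a logarithmic Sobolev inequality for $\mathscr{E}_{\epsilon,\delta}^{\Lambda_N}$ with constant $c_2 N^{-2}$, uniformly in $\epsilon$ and $\delta$.

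Second, the standard entropy dissipation argument gives
\[
H\bigl(\mu_t^{N,\epsilon,\delta}\,\big|\,\mu_{N,\epsilon,\delta}\bigr)\le
e^{-c_2 N^{-2}t}\,H\bigl(\mu_0^{N,\epsilon,\delta}\,\big|\,\mu_{N,\epsilon,\delta}\bigr),
\]
where $\mu_t^{N,\epsilon,\delta}$ is the law of $\Phi_t^{\Lambda_N,\epsilon,\delta}$ at time $t$. I then need to bound the initial relative entropy by $CN^d$ uniformly in $\epsilon,\delta$. Since $\rho$ is absolutely continuous with a density having finite second moment by Assumption (ii)-(iii), the relative entropy of the product initial measure with respect to $\bar{\mu}_{\Lambda_N}$ (which has no interaction across sites) is additive over $x\in\Lambda_N$ and hence bounded by $c|\Lambda_N|=c(2N+1)^d$. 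The passage from $\bar{\mu}_{\Lambda_N}$ to $\mu_{\Lambda_N,\epsilon,\delta}$ costs at most $|\log Z_{\Lambda_N,\epsilon,\delta}/Z_{\Lambda_N}|$, and this normalization ratio is controlled uniformly in $\delta$ by the probability of the event $\{0\le\phi^{(1)}\le\phi^{(2)}\}$ under $\bar\mu_{\Lambda_N}$, whose logarithm is $O(N^d)$. Consequently,
\[
H\bigl(\mu_0^{N,\epsilon,\delta}\,\big|\,\mu_{N,\epsilon,\delta}\bigr)\le c_3 N^d
\]
uniformly in $\epsilon,\delta$.

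Finally, Pinsker's inequality gives
\[
d_{\mathrm{var}}\bigl(\mu_t^{N,\epsilon,\delta},\,\mu_{N,\epsilon,\delta}\bigr)\le
\sqrt{\tfrac12 c_3 N^d\,e^{-c_2 N^{-2}t}},
\]
and renaming constants absorbs the square root into the prefactor and the exponent. Passing to the limit $\delta\downarrow0$ and then $\epsilon_1\downarrow0$, $\epsilon_2\downarrow0$ through \eqref{eq2.4} and \eqref{eq2.3}, which yield convergence in $L^2$ and hence in law of $\Phi_t^{\Lambda_N,\epsilon,\delta}$ to $\Phi_t^{\Lambda_N}$, together with the convergence of $\mu_{\Lambda_N,\epsilon,\delta}$ to $\mu_{\Lambda_N}$ (using uniform convergence of $W_{\epsilon,\delta}$ to $W_\epsilon$ and monotone convergence as $\epsilon\downarrow0$), the lower semicontinuity of the variational distance under weak convergence delivers the claimed bound.

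The main technical obstacle will be the third step: controlling the initial relative entropy and the normalization ratio $Z_{\Lambda_N,\epsilon,\delta}/Z_{\Lambda_N}$ uniformly in $(\epsilon,\delta)$, particularly in $d\ge 3$ where the wall condition is comparable to a large-deviation event. A careful estimate using the explicit form of $W_{\epsilon,\delta}$ and the product structure of $\bar\mu_{\Lambda_N}$ should suffice to extract the $N^d$ rate without worse dependence on $(\epsilon,\delta)$.
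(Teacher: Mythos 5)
Your proposal follows the same line as the paper: verify Bakry-Emery for the smoothed dynamics $\mathscr{E}_{\epsilon,\delta}$ to get the entropy decay with rate $N^{-2}$, bound the initial relative entropy $H(\mu_0^{N,\epsilon,\delta}\,|\,\mu^{N,\epsilon,\delta})$ by $CN^d$ using the product structure of the initial data and the surface-tension control on the log-partition function, then pass to the limits $\delta,\epsilon\downarrow0$ via \eqref{eq2.4} and \eqref{eq2.3}. The only cosmetic difference is that you state Pinsker's inequality explicitly and invoke lower semicontinuity of $d_{\mathrm{var}}$ at the end, while the paper organizes the limiting step as a five-term triangle inequality in $d_{\mathrm{var}}$; these are interchangeable, and the $\sqrt{N^d}$ from Pinsker is absorbed into $c_1N^d$ either way.
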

\begin{proof}
	Verifying the Bakry-Emery criteria (see \cite{DS89}), we obtain the logarithmic Sobolev
	inequality for $\mathscr{E}_{\epsilon,\delta}$:
	\[H(\mu_t^{N,\epsilon,\delta}|\mu^{N,\epsilon,\delta})
	\le\exp(-cN^{-2}t)H(\mu_0^{N,\epsilon,\delta}|\mu^{N,\epsilon,\delta}),\]
	where $\mu_0^{N,\epsilon,\delta}$ is the law of
	$\phi_t^{N,\epsilon,\delta}$ which is solution of \eqref{eq2.2}.
	Here, we denote the relative entropy by $H$, that is,
	\[H(\mu|\nu)=\begin{cases}
		\displaystyle
		E_{\nu}\left[\frac{d\mu}{d\nu}\log\frac{d\mu}{d\nu}\right],&
		\text{if }\mu<\nu, \\
		\infty,& \text{otherwise}.
	\end{cases}\]
	We therefore get
	\begin{align*}
		d_{\mathrm{var}}(\mu_t^N,\mu_N)&\le
		d_{\mathrm{var}}(\mu_t^N,\mu_t^{N,\epsilon})
		+d_{\mathrm{var}}(\mu_t^{N,\epsilon},\mu_t^{N,\epsilon,\delta})
		+d_{\mathrm{var}}(\mu_t^{N,\epsilon,\delta},\mu^{N,\epsilon,\delta}) \\
		&\qquad {}+d_{\mathrm{var}}(\mu^{N,\epsilon,\delta},\mu^{N,\epsilon})
		+d_{\mathrm{var}}(\mu^{N,\epsilon},\mu^{N}) \\
		&\le \exp(-cN^{-2}t)H(\mu_0^{N,\epsilon,\delta}|\mu^{N,\epsilon,\delta})
		+d_{\mathrm{var}}(\mu_t^N,\mu_t^{N,\epsilon})
		+d_{\mathrm{var}}(\mu_t^{N,\epsilon},\mu_t^{N,\epsilon,\delta}) \\
		&\qquad {}+d_{\mathrm{var}}(\mu^{N,\epsilon,\delta},\mu^{N,\epsilon})
		+d_{\mathrm{var}}(\mu^{N,\epsilon},\mu^{N}).
	\end{align*}
	Here, using \eqref{eq2.3} and \eqref{eq2.4}, all terms except the first
	converge to $0$ as $\delta\downarrow0, \epsilon_1\downarrow0$
	and $\epsilon_2\downarrow0$. Therefore, it suffices
	to show
	\begin{equation}\label{eq2.3f}
		H(\mu_0^{N,\epsilon,\delta}|\mu^{N,\epsilon,\delta})\le CN^d
	\end{equation}
	with a constant $C>0$ independent of $N,\epsilon,\delta$.
	Calculating the relative entropy, we obtain
	\begin{align*}
	H(\mu_0^{N,\epsilon,\delta}|\mu^{\epsilon,\delta})
	& = 2\log Z_{\Lambda_N,0} + \log Z_{\Lambda_N,\epsilon,\delta} \\
	&\qquad {}+\sum_{x\in\Lambda_N}E_{\rho}\left[\log f(\phi^{(1)}(x),
	\phi^{(2)}(x)) 
	+W_{\epsilon,\delta}(\phi^{(1)}(x),\phi^{(2)}(x))\right] \\
	&\qquad {}+\sum_{i=1,2}E_{\rho}\left[H_{\Lambda}(\phi^{(i)})\right]
	\end{align*}
	Since $W_{\epsilon,\delta}(u)\equiv0$ on the support of $\rho$,
	we obtain $E_\rho[W_{\epsilon,\delta}(\phi(x))]=0$
	for every $x\in\Lambda_N$.
	Noting that $W_{\epsilon,\delta}$ is non-negative, we obtain
	$\log Z_{\Lambda_N,\epsilon,\delta}\le0$.
	Since $N^{-d}\log Z_{\Lambda_N,0}$ converges to $\sigma(0)$ when $N\to\infty$,
	where $\sigma$ is the (unnormalized) surface tension
	(see \cite{DGI00} or \cite{FS97}),
	this term is bounded above by $CN^d$ for some constant $C>0$.
	Noting that 
	\[\sum_{x\in\Lambda_N}E_\rho[\log f(\phi^{(1)}(x),\phi^{(2)})(x)]+
	\sum_{i=1,2}E_\rho\left[H_\Lambda(\phi^{(i)})\right]
	\le CN^d\]
	holds for some constant $C>0$, we obtain the desired estimate \eqref{eq2.3f}.
\end{proof}

Since we have finished the preparation, we are at the position to show
the lower bound for the expected value.
\begin{proof}[Proof of \eqref{eq2.5b}]
We at first note that
\begin{align*}
	\lim_{N\to\infty}&\mu_\Lambda\left(|\Lambda_{\kappa N}|^{-1}
	\sum_{x\in \Lambda_{\kappa N}}\phi^{(i),N}(x)\le \sqrt{(C_i-\eta)\log_{d}(N)}
	\right)=0
\end{align*}
holds for every $i=1,2,\,\eta>0$ and $0<\kappa<1$,
see \cite{BG03} (when $d\ge3$) or \cite{Sa03} (when $d=2$).
Choosing $N=t^{1/2-\epsilon}$ and combining the above
with Proposition~\ref{LSI}, we obtain
\begin{align}
	\lim_{t\to\infty}&P\left(|\Lambda_{\kappa N}|^{-1}
	\sum_{x\in \Lambda_{\kappa N}}\phi_t^{(i),N}(x)\le \sqrt{(C_i-\eta)\log_{d}(N)}
	\right)=0, \label{eq2.5}
\end{align}
for every $i=1,2,\,\eta>0$ and $0<\kappa<1$.
Since we get
\begin{align*}
	E\left[\phi_t^{(i)}(0)\right]
	&=E\left[|\Lambda_{\kappa N}|^{-1}
	\sum_{x\in \Lambda_{\kappa N}}\phi_t^{(i)}(x)\right] \\
	&\ge E\left[|\Lambda_{\kappa N}|^{-1}
	\sum_{x\in \Lambda_{\kappa N}}\phi_t^{(i),N}(x)\right] \\
	&\ge \sqrt{(C_i-\eta)\log_{d}(N)}\,
	P\left(
	|\Lambda_{\kappa N}|^{-1}
	\sum_{x\in \Lambda_{\kappa N}}\phi_t^{(i),N}(x)> \sqrt{(C_i-\eta)\log_{d}(N)}	\right).
\end{align*}
for every $t>0$ and $N\ge1$ from
the invariance of the initial data and coefficients of \eqref{SDE1}
under the spatial shift, Proposition~\ref{comparison3} and the
Chebyshev inequality, we obtain
\begin{equation}\label{eq2.7}
	\liminf_{t\to\infty}E\left[\frac{\phi_t^{(i)}(0)}{\sqrt{\log_d(t)}}
	\right]\ge \sqrt{(C_i-\eta)/2}.
\end{equation}
from \eqref{eq2.5}. Since $\eta>0$ is arbitrary, we obtain the conclusion.
\end{proof}

\subsection{Estimate for the asymptotic variance}\label{estimate-var}
Since we have already shown the estimate \eqref{eq2.5b} for the expected value, it suffices to show that the variance
of $\phi_t^{(1)}(0)$ is small enough in order to obtain \eqref{eq2.6b}.
Our goal in this subsection is the following:
\begin{theorem}\label{asymp-var}
	There exist two constants $K_1,K_2>0$ independent of $t$ such that
	\[\mathrm{var}(\phi_t^{(i)}(0))\le K_1\int_0^{K_2t}p_s(0,0)\,ds\]
	holds for every $t>0$ and $i=1,2$.
\end{theorem}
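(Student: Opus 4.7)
The plan is to reduce the variance estimate to the smoothed dynamics $\Phi_t^{N,\epsilon,\delta}$ of~\eqref{eq2.2}, exploit the Lipschitz comparison of Proposition~\ref{semigr-lip} to control how $\phi_t^{(i)}(0)$ depends on the driving noise at intermediate times, and conclude via the It\^o isometry applied to a martingale representation. Limits in $\delta$, $\epsilon$, and $N$ will then be handled by \eqref{eq2.4}, \eqref{eq2.3}, and \eqref{eq2.3d}, together with the uniform moment bound of Lemma~\ref{lem5.3a}.

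I would first work with $\Phi_t^{N,\epsilon,\delta}$, for which the drift $\chi'_{\epsilon,\delta}$ is smooth and globally Lipschitz, so pathwise differentiation in the Brownian driver is routine. Conditioning on the initial datum $\Phi_0$, the bounded martingale $M_s:=E[\phi_t^{(i),N,\epsilon,\delta}(0)\mid\mathcal F_s,\Phi_0]$ admits a representation $dM_s=\sum_{y,j}H_s^{(j)}(y)\,dw_s^{(j)}(y)$, and the It\^o isometry gives
\[\mathrm{var}\bigl(\phi_t^{(i),N,\epsilon,\delta}(0)\bigm|\Phi_0\bigr)=\sum_{y\in\Lambda_N}\sum_{j=1,2}\int_0^t E\bigl[H_s^{(j)}(y)^2\bigm|\Phi_0\bigr]\,ds.\]
By the Markov property, $H_s^{(j)}(y)$ equals $\sqrt 2$ times the derivative of $\phi_{t-s}^{(i),N,\epsilon,\delta}(0)$ in the initial coordinate $\phi_0^{(j)}(y)$, evaluated at the random state $\Phi_s^{N,\epsilon,\delta}$.

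The key input is then Proposition~\ref{semigr-lip}: perturbing the initial datum by $h>0$ in the single $(j,y)$-coordinate, the monotonicity shows that $\phi_t^{(i)}$ increases componentwise, and~\eqref{2.11b} yields
\[0\le\frac{\partial\phi_t^{(i),N,\epsilon,\delta}(0)}{\partial\phi_0^{(j)}(y)}\le p_t(0,y),\]
hence $|H_s^{(j)}(y)|\le\sqrt 2\,p_{t-s}(0,y)$. Summing over $y$ and $j$ via $\sum_y p_{t-s}(0,y)^2=p_{2(t-s)}(0,0)$ and integrating in $s$ yields $\mathrm{var}(\phi_t^{(i),N,\epsilon,\delta}(0)\mid\Phi_0)\le 2\int_0^{2t}p_u(0,0)\,du$. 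The remaining contribution $\mathrm{var}(E[\phi_t^{(i),N,\epsilon,\delta}(0)\mid\Phi_0])$ arising from the randomness of the initial datum will be handled by Efron--Stein on the i.i.d.\ product structure of Assumption~1 combined with the same spatial Lipschitz bound $p_t(0,y)$, yielding $C\,p_{2t}(0,0)$, which is absorbed into the same integral after enlarging $K_1$ and $K_2$.

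Finally, \eqref{eq2.4}, \eqref{eq2.3}, and \eqref{eq2.3d}, combined with the uniform moment bounds, give $L^2$-convergence of $\phi_t^{(i),N,\epsilon,\delta}(0)$ to $\phi_t^{(i)}(0)$, so the variance inequality survives with the same constants in the limit. The main technical obstacle I expect is the rigorous identification of $H_s^{(j)}(y)$ as the pathwise derivative of the semigroup of $\Phi_t^{N,\epsilon,\delta}$, which amounts to interchanging the conditional expectation with the initial-data derivative of the penalised flow. Because the coefficients of~\eqref{eq2.2} are smooth with bounded first derivatives, this reduces to a standard stability argument for linear SDEs, so the substantive content of the estimate is the spatial Lipschitz bound of Proposition~\ref{semigr-lip}.
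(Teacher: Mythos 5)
Your proposal follows essentially the same route as the paper: reduce to the smoothed finite-volume dynamics \eqref{eq2.2} via \eqref{eq2.4}, \eqref{eq2.3}, \eqref{eq2.3d}, represent $\phi_t^{(i)}(0)$ through the martingale $M_s^{(i)}=(P_{t-s}\phi^{(i)}(0))(\Phi_s)$, bound the gradient of the semigroup by $p_{t-s}(0,y)$ using Proposition~\ref{semigr-lip}, and sum via $\sum_y p_s(0,y)^2=p_{2s}(0,0)$. The only difference is that you explicitly control the contribution $\mathrm{var}\bigl(E[\phi_t^{(i)}(0)\mid\Phi_0]\bigr)$ coming from the random initial data by Efron--Stein together with the same Lipschitz bound, a term the paper's identity $\mathrm{var}(\phi_t^{(i)})=E[\langle M^{(i)}\rangle_t]$ passes over; your treatment is the more careful one and it is absorbed into the same integral, so the conclusion is unchanged.
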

Once we obtain Theorem~\ref{asymp-var}, we can immediately conclude
\eqref{eq2.6b} from \eqref{eq2.5b}, since
\[\lim_{t\to\infty}\frac{K_1}{\log_d(t)}\int_0^{K_2t}p_s(0,0)\,ds=0\]
holds for every $d\ge2$. 

\begin{proof}[Proof of Theorem~\ref{asymp-var}]
	In \cite{DZ03}, the random walk representation for the variance
	of the dynamics with reflection is established.
	Once we have the random walk representation
	for our system similarly to \cite{DZ03},
	Theorem~\ref{asymp-var} can immediately be obtained as its corollary.
	However, we shall show Theorem~\ref{asymp-var} using the approximation
	from the dynamics with weak reflection introduced by
	\eqref{eq2.1} and \eqref{eq2.2}.
	Applying \eqref{eq2.4}, \eqref{eq2.3}, \eqref{eq2.3d},
	we only have to show the following: there exist constants
	$K_1,K_2>0$ independent of $\Lambda,\epsilon,\delta$ and $t$ such that
	\begin{equation}
		\mathrm{var}(\phi_t^{(i),\Lambda,\epsilon,\delta}(0))\le K_1
		\int_0^{K_2t}p_s(0,0)\,ds
	\end{equation}
	holds for every $t>0$ and $i=1,2$.

	For simple notation, during the proof we denote 
	$\phi_t^{(i),\Lambda,\epsilon,\delta}$ and
	$\Phi_t^{\Lambda,\epsilon,\delta}$
	simply by 
	$\phi_t^{(i)}$ and $\Phi_t$, respectively.
	Let $F^{(i)}=\phi^{(i)}(0)$. For fixed $t>0$,
	we introduce a function $u^{(i)}(s,\Phi)$ by
	\[u^{(i)}(s,\Phi)=(P_{t-s}F^{(i)})(\Phi),\]
	where $P_{t}$ is the semigroup associated with the Markov process
	$\Phi_t$. Then, it is easy to see that
	$M_s^{(i)}:=u^{(i)}(s,\Phi_s),\,0\le s\le t$ is continuous martingale
	written as
	\begin{equation}\label{eq2.12b}
	M_s^{(i)}=\sqrt{2}\sum_{x\in\Lambda}\sum_{j=1,2}\int_0^s
	\frac{\partial u^{(i)}}{\partial \phi^{(j)}(x)}(r,\phi_r)\,dw_r^{(j)}.
	\end{equation}
	Note that one can easily see that $u^{(i)}(s,\cdot)$ is a function
	of $C^\infty$-class, since $W_{\epsilon,\delta}$ is $C^\infty$-class.
	Here, from the definition of $M^{i}$, we have
	\[\mathrm{var}(\phi_t^{(i)})=E\left[\langle M^{(i)}\rangle_t\right].\]
	Using the expression \eqref{eq2.12b}, we get
	\[E\left[\langle M^{(i)}\rangle_t\right]
	=2\sum_{x\in\Lambda}\sum_{j=1,2}\int_0^t
	E\left[\left(\frac{\partial u^{(i)}}{\partial \phi^{(j)}(x)}(r,\phi_r)\right)^2\right]\,dr.\]	Here, since Proposition~\ref{semigr-lip} implies
	\[\sup_{\phi\in(\real^{\Lambda})^2}
	\left|\frac{\partial u^{(i)}}{\partial \phi^{(j)}(x)}(r,\phi)\right|
	\le p_{t-r}(0,x),\]
	we obtain
	\[E\left[\langle M^{(i)}\rangle_t\right]
	\le 4\sum_{x\in\Lambda}\int_0^t p_{t-r}(0,x)\,dr
	\le 4\int_0^t p_{2r}(0,0)\,dr,\]
	which shows the conclusion.
\end{proof}

\section{Proof of the upper bound}\label{sec-upper}
In this section, we shall establish the upper bound of the height.
Our goal is the following:
\begin{theorem}\label{upper-bound}
	Under the condition of Theorem~\ref{main-thm}, we have
	\begin{gather}
		\limsup_{t\to\infty}E\left[\frac{\phi_t^{(i)}(0)}{\sqrt{\log_d(t)}}
		\right]\le\sqrt{C_i/2}, \label{eq3.1b}\\
		\lim_{t\to\infty}P\left(
		\phi_t^{(1)}(0)\ge \sqrt{(C_i/2+\epsilon)\log_d(t)}
		\right)=0, \label{eq3.3b}
	\end{gather}
	for every $i=1,2,\,\epsilon>0$, where constants $C_1,C_2$ are same as in Theorem~\ref{main-thm}.
\end{theorem}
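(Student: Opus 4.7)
The plan is to dominate $\Phi_t=(\phi_t^{(1)},\phi_t^{(2)})$ from above by two (uncoupled) single-interface Skorokhod dynamics, each reflected at $0$, and then quote the one-interface entropic-repulsion result of \cite{DN03} for each. Combining the two bounds will yield the claimed rates $\sqrt{C_1/2}$ and $\sqrt{C_2/2}=(1+\sqrt2)\sqrt{C_1/2}$.

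For the lower interface, let $\tilde\phi_t^{(1)}$ solve the single-interface SDE driven by the same Brownian family $\{w_t^{(1)}(x)\}$, reflected at $0$ (i.e.\ \eqref{SDE1} with the $-d\ell_t^{(2)}(x)$ term removed), started from $\tilde\phi_0^{(1)}=\phi_0^{(1)}$. The term $-d\ell_t^{(2)}(x)$ in the equation for $\phi^{(1)}$ pushes downward only, so a comparison argument entirely parallel to Proposition~\ref{comparison3}---applied through the weakly reflected approximations \eqref{eq2.2} where all coefficients are Lipschitz, then passing to the limit via \eqref{eq2.4} and \eqref{eq2.3}---gives $\phi_t^{(1)}(x)\le\tilde\phi_t^{(1)}(x)$ for all $t,x$. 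Applying the main theorem of \cite{DN03} to $\tilde\phi_t^{(1)}$ immediately yields \eqref{eq3.1b} and \eqref{eq3.3b} for $i=1$.

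For the upper interface, write $\phi_t^{(2)}=\phi_t^{(1)}+\eta_t$ with $\eta_t:=\phi_t^{(2)}-\phi_t^{(1)}\ge 0$. Subtracting the two equations in \eqref{SDE1} gives $d\eta_t(x)=\Delta\eta_t(x)\,dt+\sqrt2(dw_t^{(2)}(x)-dw_t^{(1)}(x))+2\,d\ell_t^{(2)}(x)-d\ell_t^{(1)}(x)$, where $\eta_t$ is reflected at $0$. The noise increment can be written as $2\,d\tilde w_t(x)$ with $\tilde w_t(x):=(w_t^{(2)}(x)-w_t^{(1)}(x))/\sqrt2$ a standard Brownian family. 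Let $\tilde\eta_t$ solve the single-interface Skorokhod equation with the downward term $-d\ell_t^{(1)}(x)$ removed, i.e.\ $d\tilde\eta_t=\Delta\tilde\eta_t\,dt+2\,d\tilde w_t+d\tilde L_t$ with $\tilde\eta_t\ge 0$ and $\tilde L_t$ the local time at $0$. The same comparison argument as above gives $\eta_t(x)\le\tilde\eta_t(x)$. After rescaling $\zeta_t:=\tilde\eta_t/\sqrt2$, the process $\zeta_t$ satisfies the standard reflected single-interface equation with noise coefficient $\sqrt2$, so \cite{DN03} applies to $\zeta$ and, undoing the scaling, yields $\tilde\eta_t(0)/\sqrt{\log_d(t)}\to\sqrt2\cdot\sqrt{C_1/2}$ in probability and in expectation.

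Combining the two domination inequalities, $\phi_t^{(2)}(0)\le\tilde\phi_t^{(1)}(0)+\tilde\eta_t(0)$, and using the subadditivity of $\limsup$ together with the two one-interface rates, gives $\limsup_{t\to\infty}\phi_t^{(2)}(0)/\sqrt{\log_d(t)}\le\sqrt{C_1/2}+\sqrt2\sqrt{C_1/2}=(1+\sqrt2)\sqrt{C_1/2}=\sqrt{C_2/2}$, and similarly for the expectation. This gives \eqref{eq3.1b} and \eqref{eq3.3b} for $i=2$. The main technical obstacle is the rigorous justification of the two comparison inequalities for Skorokhod systems with differing reflection/push terms; as in the rest of the paper (see Propositions~\ref{comparison3} and \ref{semigr-lip}, and Lemmas~\ref{lem5.2a}--\ref{lem5.4a}) this will be carried out at the level of the weakly reflected approximations \eqref{eq2.1}, \eqref{eq2.2} where the Lipschitz structure permits a direct energy-type computation on $(\psi^{+})^2$, followed by passage to the limit via \eqref{eq2.4} and \eqref{eq2.3}.
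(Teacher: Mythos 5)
Your proposal is correct and follows essentially the same strategy as the paper: dominate the lower interface by the single reflected interface of \cite{DN03}, dominate the gap $\phi_t^{(2)}-\phi_t^{(1)}$ by a reflected interface driven by $2\,d\tilde w_t$ (which is $\sqrt2$ times the standard one, giving the extra $\sqrt2\sqrt{C_1/2}$), and add the two rates to obtain $\sqrt{C_2/2}=(1+\sqrt2)\sqrt{C_1/2}$. The only cosmetic difference is that the paper routes the gap comparison through two intermediate coupled systems ($\tilde\Phi_t$ in \eqref{SDE2a} and $\hat\Phi_t$ in \eqref{SDE2b}) rather than comparing $\eta_t$ directly with a standalone reflected process, but the underlying energy computation is identical.
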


We shall show Theorem~\ref{upper-bound} by comparing
with suitable dynamics.
We at first introduce another dynamics of interfaces:
the upper interface is on the lower and the lower interface
is not pushed down by the upper. Let $\tilde\Phi_t=(\tilde\phi_t^{(1)},
\tilde\phi_t^{(2)})$ be the solution of the following equation
\begin{equation}\label{SDE2a}
\begin{cases}
	d\tilde\phi_t^{(1)}(x)=\Delta\tilde\phi_t^{(1)}(x)dt+\sqrt{2}dw_t^{(1)}(x)
	+d\tilde\ell_t^{(1)}(x),& x\in\integer^d, \\
	d\tilde\phi_t^{(2)}(x)=\Delta\tilde\phi_t^{(2)}(x)dt+\sqrt{2}dw_t^{(2)}(x)
	+d\tilde\ell_t^{(2)}(x), & x\in\integer^d,
\end{cases}
\end{equation}
where $\tilde\ell_t^{(1)}(x)$ and $\tilde\ell_t^{(2)}(x)$ are the local
times of $\tilde\phi_t^{(1)}(x)$ and $\tilde\phi_t^{(2)}(x)-\tilde\phi_t^{(1)}(x)$ at $0$, respectively. The such process can be constructed by the following way:
we at first construct the process $\tilde\phi_t^{(1)}$, and after that, we construct the process $\tilde\phi_t^{(2)}$ over $\tilde\phi_t^{(1)}$.
The following proposition implies that the system $\tilde\Phi_t$
always stays above the original system $\Phi_t$ and therefore we only need to
establish the upper bound for \eqref{SDE2a}.
\begin{prop}\label{prop3.1}
	Let $\Phi_t$ and $\tilde\Phi_t$ be solutions of
	\eqref{SDE1} and \eqref{SDE2a} with common Brownian motions
	and initial data, respectively. We then
	have $\Phi_t\le \tilde\Phi_t$ for every $t>0$.
\end{prop}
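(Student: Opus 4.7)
The plan is to adapt the comparison argument of Proposition~\ref{comparison3} to the infinite lattice via a weighted $\ell^2$-norm of the positive part of the difference. Set $\psi^{(i)}_t(x) := \phi^{(i)}_t(x) - \tilde\phi^{(i)}_t(x)$; since both systems are driven by the same Brownian motions, the noise terms cancel, so each $\psi^{(i)}$ is a continuous finite-variation process. The common initial data give $\psi^{(i)}_0 \equiv 0$, so it suffices to prove that
\[
F_t := \sum_{i=1,2}\sum_{x\in\integer^d} e^{-2r|x|}\bigl(\psi^{(i)}_t(x)^{+}\bigr)^{2}
\]
remains zero for all $t\ge0$. By the chain rule $d(\psi^{+})^2 = 2\psi^{+}\,d\psi$ for continuous finite-variation processes, differentiating $F_t$ will yield a Laplacian contribution together with four reflection contributions coming from $d\ell^{(1)},\,d\ell^{(2)},\,d\tilde\ell^{(1)}$ and $d\tilde\ell^{(2)}$.

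I would next treat the reflection contributions case by case. The term $2(\psi^{(1)})^{+}\,d\ell^{(1)}$ vanishes, since $d\ell^{(1)}$ is supported on $\{\phi^{(1)}=0\}$ where $\psi^{(1)} = -\tilde\phi^{(1)}\le 0$ forces $(\psi^{(1)})^{+}=0$. The two terms $-2(\psi^{(1)})^{+}\,d\tilde\ell^{(1)}$ and $-2(\psi^{(2)})^{+}\,d\tilde\ell^{(2)}$ are trivially nonpositive. The delicate pair consists of the two $d\ell^{(2)}$-contributions, which I would combine as
\[
2\bigl((\psi^{(2)})^{+} - (\psi^{(1)})^{+}\bigr)\,d\ell^{(2)};
\]
on the support of $d\ell^{(2)}$ one has $\phi^{(2)}=\phi^{(1)}$, and then the built-in ordering $\tilde\phi^{(2)}\ge\tilde\phi^{(1)}$ of the auxiliary dynamics \eqref{SDE2a} gives $\psi^{(2)}\le\psi^{(1)}$ and hence $(\psi^{(2)})^{+}\le (\psi^{(1)})^{+}$, so the combined contribution is nonpositive. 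For the Laplacian contribution, a weighted version of the summation-by-parts identity \eqref{summation-by-parts} together with $(u-v)(u^{+}-v^{+})\ge (u^{+}-v^{+})^2$ produces a manifestly nonpositive Dirichlet-type term plus a lower-order remainder generated by $\nabla e^{-2r|\cdot|}$; an elementary Cauchy--Schwarz absorbs the remainder into $C_r F_t$ for some $C_r>0$ (using part of the nonpositive Dirichlet term as the second factor).

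Putting the pieces together should yield $F_t \le C_r\int_0^t F_s\,ds$, and Gronwall's inequality then forces $F_t\equiv 0$, which is the desired inequality $\Phi_t\le\tilde\Phi_t$. I expect the main obstacle to be the analysis of the coupled $d\ell^{(2)}$ contributions: the term $+2(\psi^{(2)})^{+}\,d\ell^{(2)}$ is not signed in isolation, and the correct pairing with $-2(\psi^{(1)})^{+}\,d\ell^{(2)}$ depends essentially on the ordering $\tilde\phi^{(2)}\ge\tilde\phi^{(1)}$ in the tilde system---this is the precise structural property of \eqref{SDE2a} that makes it a valid dominating process and that distinguishes the present proof from the simpler single-interface comparison.
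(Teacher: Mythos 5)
Your argument is correct, and it reaches the conclusion by a route that differs from the paper's in two respects. First, the paper does not run the weighted-norm Gronwall argument on $\integer^d$ directly: it reduces to the finite-volume dynamics \eqref{SDE2a-fin}, proves $\Phi_t^\Lambda\le\tilde\Phi_t^\Lambda$ there with the plain (unweighted) sum over $\Lambda$, and then passes to the limit $\Lambda\uparrow\integer^d$ using that both processes are increasing limits of their finite-volume versions. Your infinite-volume version is fine in principle, but it silently relies on $F_t<\infty$, on term-by-term differentiation of the infinite sum, and on the weighted summation-by-parts remainder estimate --- all standard (and consistent with Lemma~\ref{lem5.3}) but precisely the technicalities the finite-volume reduction is designed to avoid. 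Second, and more substantively, you treat the two layers \emph{simultaneously}: you pair the $+2(\psi^{(2)})^{+}d\ell^{(2)}$ and $-2(\psi^{(1)})^{+}d\ell^{(2)}$ contributions and kill them using only the a priori ordering $\tilde\phi^{(2)}\ge\tilde\phi^{(1)}$ of the dominating system, via $(\psi^{(2)})^{+}\le(\psi^{(1)})^{+}$ on $\{\phi^{(2)}=\phi^{(1)}\}$. The paper instead argues \emph{sequentially}: it first proves $\phi^{(1),\Lambda}_t\le\tilde\phi^{(1),\Lambda}_t$, discarding $-2(\psi^{(1)})^{+}d\ell^{(2)}$ as trivially nonpositive, and only then handles $+2(\psi^{(2)})^{+}d\ell^{(2)}$ by observing that the already-established first-layer comparison forces $\phi^{(2)}-\phi^{(1)}>\tilde\phi^{(2)}-\tilde\phi^{(1)}\ge0$ on $\{\psi^{(2)}>0\}$, so $d\ell^{(2)}$ does not charge that set. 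Your pairing buys a one-pass, more symmetric proof that does not need the first-layer result as an input; the paper's two-step version buys simpler bookkeeping (each step involves a single scalar positive part rather than the coupled functional) and mirrors the hierarchical construction of $\tilde\Phi_t$. Both identify the same essential structural fact, namely that the $d\ell^{(2)}$ contribution is the only term that is not signed in isolation.
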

\begin{proof}
	Let $\tilde\Phi_t^\Lambda=(\tilde\phi_t^{(1),\Lambda},
	\tilde\phi_t^{(2),\Lambda})$ be the strong solution of the following
	SDEs:
	\begin{equation}\label{SDE2a-fin}
	\begin{cases}
		d\tilde\phi_t^{(1)}(x)=\Delta\tilde\phi_t^{(1)}(x)dt+\sqrt{2}dw_t^{(1)}(x)
		+d\tilde\ell_t^{(1)}(x),& x\in\Lambda, \\
		d\tilde\phi_t^{(2)}(x)=\Delta\tilde\phi_t^{(2)}(x)dt+\sqrt{2}dw_t^{(2)}(x)
		+d\tilde\ell_t^{(2)}(x), & x\in\Lambda, \\
		\tilde\phi_t^{(1)}(x)=\tilde\phi_t^{(2)}(x)=0,& x\in \Lambda^\complement.
	\end{cases}
	\end{equation}
	It is sufficient for the goal to show $\Phi_t^\Lambda\le\tilde\Phi_t^\Lambda$
	for every finite $\Lambda\subset\integer^d$,
	since $\Phi_t$ and $\tilde\Phi_t$ can be obtained as
	increasing limits of $\Phi_t^\Lambda$ and $\tilde\Phi_t^\Lambda$
	as $\Lambda\uparrow\integer^d$, respectively.
	We define $\psi_t^{(i)}$ by $\psi_t^{(i)}=\phi_t^{(i),\Lambda}
	-\tilde\phi_t^{(i),\Lambda}$.
	Let us at first show that $\phi_t^{(1),\Lambda}\le
	\tilde\phi_t^{(1),\Lambda}$.
	Calculating $(\psi_t^{(1)}(x)^+)^2$, we have
	\begin{align*}
		\left(\psi_t^{(1)}(y)^+\right)^2&=
		2\int_0^t\psi_s^{(1)}(y)^+\Delta\psi_s^{(1)}(y)\,ds \\
		&\quad {}+2\int_0^t\psi_s^{(1)}(y)^+
		\left(d\ell_s^{(1),\Lambda}(x)-d\tilde\ell_s^{(1),\Lambda}(x)\right) \\
		&\quad {}-2\int_0^t\psi_s^{(1)}(y)^+
		d\ell_s^{(2),\Lambda}(x) \\
		&=:F_1+F_2+F_3.
	\end{align*}
	By the definition of $\ell_t^{(1),\Lambda}(x),\,\ell_s^{(2),\Lambda}(x)$
	and $\tilde\ell_t^{(1),\Lambda}(x)$,
	the terms $F_2$ and $F_3$ are nonpositive.
	By the same calculation as in the first step of the proof of
	Proposition~\ref{semigr-lip}, we obtain that 
	\[\sum_{x\in\Lambda}\left(\psi_t^{(1)}(x)^+\right)^2=0,\]
	which shows $\phi_t^{(1),\Lambda}\le\tilde\phi_t^{(1),\Lambda}$.
	
	Next, we shall show $\phi_t^{(2),\Lambda}\le\tilde\phi_t^{(2),\Lambda}$.
	Calculating $(\psi_t^{(1)}(x)^+)^2$, we obtain
	\begin{align*}
		\left(\psi_t^{(2)}(x)^+\right)^2&=
		2\int_0^t\psi_s^{(2)}(x)^+
		\Delta\psi_s^{(2)}(x)\,ds \\
		&\quad {}+2\int_0^t
		\psi_s^{(2)}(x)^+
		\left(d\ell_s^{(2),\Lambda}(x)-d\tilde\ell_s^{(2),\Lambda}(x)\right).
	\end{align*}
	Noting that $\phi_s^{(1),\Lambda}\le\tilde\phi_s^{(1),\Lambda}$,
	we have
	\[\phi_s^{(2),\Lambda}(x)-\phi_s^{(1),\Lambda}(x)
	>\tilde\phi_s^{(2),\Lambda}(x)-\tilde\phi_s^{(2),\Lambda}(x)\]
	if $\psi_s^{(2)}(x)^+>0$ holds.
	We therefore obtain that the second term is nonpositive.
	Similarly to above, we conclude
	$\psi_t^{(2)}=\phi_t^{(2),\Lambda}-\tilde\phi_t^{(2),\Lambda}\le0$
	for every $t>0$.
\end{proof}

Theorem~1.1
of \cite{DN03} indicates the upper bound \eqref{eq3.1b} and
\eqref{eq3.3b} for $i=1$. We shall focus our attention to the upper bound
for $\tilde\phi_t^{(2)}$. However, since we also have the lower bound
for $\tilde\phi_t^{(1)}$ (see \cite{DN03}), we only need to show
the followings bounds for $\tilde\psi_t:=\tilde\phi_t^{(2)}-
\tilde\phi_t^{(1)}$:
\begin{gather}
	\limsup_{t\to\infty}E\left[\frac{\tilde\psi_t(0)}{\sqrt{\log_d(t)}}
	\right]\le\sqrt{C_2/2}-\sqrt{C_1/2}, \label{eq3.5b}\\
	\lim_{t\to\infty}P\left(
	\tilde\psi_t(0)\ge (\sqrt{C_2/2}-\sqrt{C_1/2}+\epsilon)\sqrt{\log_d(t)}
	\right)=0, \label{eq3.6b}
\end{gather}
for every $\epsilon>0$.

We furthermore introduce the stochastic dynamics
$\hat\Phi_t=(\hat\phi_t^{(1)},\hat\phi_t^{(2)})$ by the following SDEs:
\begin{equation}\label{SDE2b}
\begin{cases}
	d\hat\phi_t^{(1)}(x)=\Delta\hat\phi_t^{(1)}(x)dt+\sqrt{2}dw_t^{(1)}(x),& x\in\integer^d, \\
	d\hat\phi_t^{(2)}(x)=\Delta\hat\phi_t^{(2)}(x)dt+\sqrt{2}dw_t^{(2)}(x)
	+d\hat\ell_t^{(2)}(x), & x\in\integer^d,
\end{cases}
\end{equation}
where $\hat\ell_t^{(2)}(x)$ are the local
time of $\hat\phi_t^{(2)}(x)-\hat\phi_t^{(1)}(x)$ at $0$.
This dynamics can be regarded as the dynamics of interfaces such that
one interface is on the other which is dynamics without wall.
We shall then compare $\tilde\psi_t$ with
$\hat\psi_t:=\hat\phi_t^{(2)}-\hat\phi_t^{(1)}$.
Repeating the argument in the proof of Proposition~\ref{prop3.1},
we obtain the following proposition:
\begin{prop}\label{prop3.2}
	Let $\tilde\Phi$ and $\hat\Phi$ be solutions of
	\eqref{SDE2a} and \eqref{SDE2b} with common Brownian motions
	and initial data, respectively.
	We then have $\tilde\psi_t\le \hat\psi_t$
	for every $t>0$.
\end{prop}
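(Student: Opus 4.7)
The plan is to mimic the strategy of Proposition~\ref{prop3.1}. First I would pass to the finite volume: introduce $\tilde\Phi_t^\Lambda$ and $\hat\Phi_t^\Lambda$, the solutions of \eqref{SDE2a} and \eqref{SDE2b} restricted to a finite $\Lambda\subset\integer^d$ with zero Dirichlet boundary data on $\Lambda^\complement$ (exactly as in \eqref{SDE2a-fin}), and observe that both processes are obtained as increasing limits in $\Lambda\uparrow\integer^d$ by the comparison arguments already in the paper. Hence it suffices to prove $\tilde\psi_t^\Lambda\le\hat\psi_t^\Lambda$ for every finite $\Lambda$.

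Next I would form the gap difference $\xi_t(x):=\tilde\psi_t^\Lambda(x)-\hat\psi_t^\Lambda(x)$ and subtract the two systems. Since the Brownian motions are common, the noise cancels entirely and
\begin{equation*}
d\xi_t(x)=\Delta\xi_t(x)\,dt
+\bigl(d\tilde\ell_t^{(2),\Lambda}(x)-d\hat\ell_t^{(2),\Lambda}(x)\bigr)
-d\tilde\ell_t^{(1),\Lambda}(x),\qquad x\in\Lambda,
\end{equation*}
with $\xi_0\equiv 0$ and $\xi_t\equiv 0$ on $\Lambda^\complement$. I then compute $\sum_{x\in\Lambda}(\xi_t(x)^+)^2$ by the chain rule (the process has no martingale part) and split it into three pieces: the Laplacian term, the contribution from $d\tilde\ell^{(1),\Lambda}$, and the difference of the gap-local-times. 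The Laplacian term is handled verbatim as in the first steps of Propositions~\ref{comparison3} and \ref{semigr-lip}, using \eqref{summation-by-parts} together with $(u-v)(u^+-v^+)\ge (u^+-v^+)^2$, and is therefore nonpositive. The $-d\tilde\ell^{(1),\Lambda}$ term is nonpositive because $\tilde\ell^{(1),\Lambda}(x)$ is nondecreasing and $\xi_s(x)^+\ge0$.

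The key step, and the one that requires a small argument, is showing
\begin{equation*}
\int_0^t\xi_s(x)^+\bigl(d\tilde\ell_s^{(2),\Lambda}(x)-d\hat\ell_s^{(2),\Lambda}(x)\bigr)\le 0.
\end{equation*}
Here I use the support properties of the Skorokhod local times: $d\tilde\ell^{(2),\Lambda}_s(x)$ is supported on $\{\tilde\psi_s^\Lambda(x)=0\}$ and $d\hat\ell^{(2),\Lambda}_s(x)$ is supported on $\{\hat\psi_s^\Lambda(x)=0\}$. If $\xi_s(x)^+>0$ then $\tilde\psi_s^\Lambda(x)>\hat\psi_s^\Lambda(x)\ge0$, so $\tilde\psi_s^\Lambda(x)>0$ and consequently $d\tilde\ell_s^{(2),\Lambda}(x)=0$; the remaining piece $-\xi_s(x)^+d\hat\ell_s^{(2),\Lambda}(x)$ is nonpositive. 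This is the main obstacle, and it is essentially the only place where the particular geometry of the two wall reflections enters the comparison.

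Combining the three estimates gives $\sum_{x\in\Lambda}(\xi_t(x)^+)^2\le 0$, hence $\tilde\psi_t^\Lambda(x)\le\hat\psi_t^\Lambda(x)$ for every $x\in\Lambda$ and $t\ge0$. Letting $\Lambda\uparrow\integer^d$ finishes the proof. If the direct chain-rule computation above is felt to need more justification (because of the Skorokhod local times), I would instead perform the same calculation on the penalized approximations from Section~\ref{sec-approx}, where all reflections are replaced by smooth drifts $\chi'_{\epsilon_i,\delta}$, use monotonicity in $\epsilon_i$ to get $\xi^{\epsilon,\delta}\le0$, and then pass to the limit via \eqref{eq2.4} and \eqref{eq2.3}, just as in the proof of Theorem~\ref{existence-finite}.
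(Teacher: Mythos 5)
Your proof is correct and follows essentially the approach the paper intends: the paper gives no separate argument for Proposition~\ref{prop3.2}, stating only that one repeats the proof of Proposition~\ref{prop3.1}, and your computation of $\sum_{x}(\xi_t(x)^+)^2$ for the gap difference, with the Laplacian term, the monotone local time $\tilde\ell^{(1)}$, and the support properties of $\tilde\ell^{(2)}$ and $\hat\ell^{(2)}$, is precisely that argument carried out for the gap processes. In particular your key observation that $d\tilde\ell^{(2)}_s(x)=0$ wherever $\xi_s(x)^+>0$ (since there $\tilde\psi_s(x)>\hat\psi_s(x)\ge0$) is exactly the mechanism used in the second step of the proof of Proposition~\ref{prop3.1}.
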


Now, we are at the position to show \eqref{eq3.5b} and \eqref{eq3.6b}.
By Proposition~\ref{prop3.2}, it is sufficient to show the similar bound
for the $\hat\psi_t$. We can easily show that $\hat\psi_t$ has the same
law as $\sqrt{2}\rho_t$, where $\rho_t$ is the solution of the following
equation:
\begin{equation*}
	d\rho_t=\Delta\rho_t(x)\,dt+\sqrt{2}d\bar{w}_t(x)+d\bar\ell_t(x),
	\quad x\in\integer^d.
\end{equation*}
Here, $\bar{w}_t=\{\bar{w}_t(x);x\in\integer^d\}$ is a family of
independent one-dimensional Brownian motions and $\bar\ell_t(x)$
is the local time of $\rho_t(x)$ at $0$. Applying 
the upper bound for $\hat\rho_t$:
\begin{gather*}
\lim_{t\to\infty}P(\rho_t(0)\ge \sqrt{C'\log_d(t)})=0 \\
\lim_{t\to\infty}E\left[\frac{\rho_t(0)}{\sqrt{\log_d(t)}}\right]=\sqrt{C_1/2}
\end{gather*}
for every $C'>C_1$ (see \cite{DN03}), we conclude \eqref{eq3.5b} and \eqref{eq3.6b}
and therefore the upper bound \eqref{eq3.1b} and \eqref{eq3.3b} for $i=2$.

\section*{Acknowledgments}
The author would like to thank to 
Professor J.-D.~Deuschel of TU Berlin, Professor T.~Funaki of University of
Tokyo, Professor H.~Sakagawa of Keio University, who gave me helpful advice.

\bibliographystyle{amsplain}
\providecommand{\bysame}{\leavevmode\hbox to3em{\hrulefill}\thinspace}
\providecommand{\MR}{\relax\ifhmode\unskip\space\fi MR }
\providecommand{\MRhref}[2]{%
  \href{http://www.ams.org/mathscinet-getitem?mr=#1}{#2}
}
\providecommand{\href}[2]{#2}

\end{document}